\documentclass{article}
\usepackage{graphicx} 
\usepackage{amsmath}
\usepackage{amssymb}
\usepackage{amsthm}
\usepackage{amscd}
\usepackage{xypic}
\usepackage{delarray}
\usepackage{mathrsfs}
\usepackage{kotex}
\usepackage{mdframed}
\usepackage{lineno,hyperref}

\headsep=1cm \numberwithin{equation}{section}
\def\F{{\mathbb F}}

\def\Q{{\mathbb Q}}

\def\Z{{\mathbb Z}}
\newtheorem{theorem}{Theorem}[section]
\newtheorem{definition}[theorem]{Definition}
\newtheorem{lemma}[theorem]{Lemma}
\newtheorem{corollary}[theorem]{Corollary}
\newtheorem{remark}[theorem]{Remark}
\newtheorem{example}[theorem]{Example}
\newtheorem{proposition}[theorem]{Proposition}

\newtheorem{open problem}[theorem]{Open problem}

\title{Frobenius Alexander quandles, knot colorings, and isogeny classes of abelian varieties over finite fields}
\author{WonTae Hwang}
\date{}
\begin{document}
    
\maketitle

\begin{abstract}
We introduce the notion of Frobenius Alexander quandles associated with abelian varieties over finite fields, and study them from both knot-theoretic and arithmetic viewpoints. We determine exactly for which rational primes a knot admits a nonconstant coloring by these quandles, in terms of the resultant of its Alexander polynomial and the Frobenius characteristic polynomial of the abelian variety. This leads to the notion of persistent colorability and its characterization, as well as applications to fibered knots and knots of small genus. We also prove a rigidity result showing that the isomorphism class of a Frobenius Alexander quandle determines the similarity class of the Frobenius endomorphism on the corresponding torsion subgroup. As a consequence, one sufficiently large Frobenius Alexander quandle determines the characteristic polynomial of the Frobenius endomorphism, and hence, determines the isogeny class of the abelian variety over the given finite base field.
\end{abstract}

\medskip
\noindent\textbf{2020 Mathematics Subject Classification.}
Primary 57K12; Secondary 14K02, 57K14.

\smallskip
\noindent\textbf{Key words.}
Frobenius Alexander quandle, knot coloring, Alexander polynomial, abelian variety, isogeny.


\section{Introduction}
Quandles were introduced independently by Joyce and Matveev in connection with knot theory \cite{Joy82,Mat82}, and finite Alexander quandles provide a classical source of knot-coloring invariants. The relation between Alexander quandle colorings and classical Alexander invariants was studied by Inoue \cite{Ino01}, who described quandle homomorphisms from knot quandles to Alexander quandles in terms of Alexander polynomials. A module-theoretic description of such quandle homomorphisms in terms of the Alexander module was later obtained by Inoue \cite{Ino10} and subsequently generalized by Traldi \cite{Tra18}. Since an Alexander quandle is determined by an abelian group together with one of its automorphisms, natural automorphisms coming from other areas of mathematics can be used to produce new families of coloring quandles.
\vskip 0.1in
Let $A$ be an abelian variety of dimension $g$ over a finite field
$\mathbb F_q$, where $q=p^a$ for some prime $p$ and an integer $a \geq 1$, and let $\pi_A$ be the Frobenius endomorphism of $A$. For every prime $\ell\neq p$, the action of Frobenius endomorphism on $A[\ell](\overline{\F}_q) \cong \F_{\ell}^{2g}$ defines the finite Alexander quandle $Q(A[\ell](\overline{\F}_q), \pi_{A,\ell})$, where $\pi_{A,\ell}$ is the restriction of $\pi_A$ to $A[\ell](\overline{\F}_q)$. We call this Alexander quandle the \emph{Frobenius Alexander quandle} associated with $A$ and $\ell.$ On the arithmetic side, Frobenius endomorphisms and their characteristic polynomials play a fundamental role in the theory of abelian varieties over finite fields, as shown in the work of Tate and Honda on isogeny classes \cite{Tate66,Hon68}. The construction considered here brings this arithmetic structure into the setting of Alexander quandles.
\vskip 0.1in
In this paper, we study these Frobenius Alexander quandles from two points of view. On the one hand, we use them as targets for knot colorings and determine when nonconstant colorings occur for infinitely many rational primes $\ell \ne p$. On the other hand, we study how much arithmetic information about $A$ is encoded by the abstract quandle $Q(A[\ell](\overline{\F}_q), \pi_{A,\ell}).$ In particular, we show that a single sufficiently large Frobenius Alexander quandle determines the Frobenius polynomial, whence, the isogeny class of $A$. To be more explicit, let $K$ be an oriented knot with normalized Alexander polynomial $\Delta_K(t) \in \Z[t]$, and let $\Sigma_A(K)$ be the set of rational primes $\ell\neq p$ for which $K$ admits a nonconstant coloring by the Frobenius Alexander quandle $Q(A[\ell](\overline{\F}_q),\pi_{A,\ell})$. If $P_A(t)$ denotes the characteristic polynomial of $\pi_A$ and if we let $R_{A,K}=\operatorname{Res}(\Delta_K(t),P_A(t)),$ then the following dichotomy is one of our main results of the paper.
\begin{theorem}[Theorem \ref{main thm A}]
    Let $A$ be an abelian variety over a finite field $\F_q$, and let $K$ be an oriented knot with the normalized Alexander polynomial $\Delta_K(t) \in \Z[t]$. Then we have
    $$\Sigma_A(K) =
\begin{cases}
\{\ell \in \mathcal{P} : \ell \neq p\}& \textrm{if}~R_{A,K}=0,\\
\{\ell \in \mathcal{P} : \ell \neq p~\textrm{and}~ \ell\mid R_{A,K}\} & \textrm{if}~R_{A,K}\neq0.
\end{cases}$$
Here, and throughout the paper, $\mathcal{P}$ denotes the set of all rational primes.
\end{theorem}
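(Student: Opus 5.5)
We will reduce the statement to a classical criterion for nonconstant colorings by a finite Alexander quandle, and then turn that criterion into an arithmetic condition on $\ell$ via resultants.

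\textbf{Step 1: linear-algebra criterion.} Fix a prime $\ell\neq p$, put $V=A[\ell](\overline{\F}_q)\cong\F_\ell^{2g}$ and $T=\pi_{A,\ell}$. Since $\ell\neq p$, the Frobenius is an isogeny of degree prime to $\ell$, so $T$ is invertible on $V$ and $Q(V,T)$ is a genuine Alexander quandle. Moreover, $T$ has characteristic polynomial $\bar P_A(t)=P_A(t)\bmod \ell$: the $\ell$-adic Tate module gives $V\cong T_\ell A/\ell T_\ell A$, and $P_A$ is the characteristic polynomial of $\pi_A$ on $T_\ell A$. A coloring of a diagram of $K$ by $Q(V,T)$ is a solution, in $V$, of the Alexander (Fox) matrix system in which $t$ acts as $T$. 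The constant colorings form the diagonal copy of $V$. After modding these out, one arc may be fixed to $0$; equivalently, by the result of Inoue (as generalized by Traldi), colorings modulo constants correspond to $\Lambda$-module homomorphisms from the Alexander module of $K$ to $V$.

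\textbf{Step 2: from modules to $\gcd$.} Over the PID $\F_\ell[t,t^{-1}]$, the reduced Alexander matrix presents the Alexander module mod $\ell$. It decomposes as $\bigoplus_i \F_\ell[t^{\pm1}]/(d_i)$ with $\prod_i d_i=\bar\Delta_K$ up to units; note $\bar\Delta_K\neq0$ since $\Delta_K(1)=\pm1$. A nonzero homomorphism to $V$ exists iff some $\F_\ell[t^{\pm1}]/(d_i)$ maps nontrivially to $V$. That happens iff some $d_i$ shares an irreducible factor with the minimal polynomial of $T$, and hence iff $\gcd(\bar\Delta_K,\bar P_A)\neq1$, since the minimal and characteristic polynomials have the same irreducible factors.

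One direction can be seen directly: if an irreducible $f$ divides both, then $f\mid d_i$ for some $i$, and the kernel of $f(T)$ contains a cyclic submodule $\F_\ell[t]/(f)$, which gives a surjection-composed map. Conversely, a nonzero image is a quotient of some $\F_\ell[t^{\pm1}]/(d_i)$ inside $V$, and its annihilator divides both $d_i$ and the minimal polynomial of $T$.

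So $\ell\in\Sigma_A(K)$ iff $\bar\Delta_K$ and $\bar P_A$ have a common root in $\overline{\F}_\ell$.

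\textbf{Step 3: resultants.} Write $\Delta_K$ normalized as a genuine polynomial with $\Delta_K(0)\neq0$. $P_A$ is monic, so reduction mod $\ell$ commutes with the resultant up to a power of the leading coefficient of $\Delta_K$. Concretely, $\bar R_{A,K}=\mathrm{Res}(\bar\Delta_K,\bar P_A)$ when $\deg\bar\Delta_K=\deg\Delta_K$. When the degree drops, the two differ by the factor $(\pm1)\cdot$ (leading coefficient of $\bar P_A$)$^{\text{drop}}=\pm1$, because $P_A$ is monic. In either case $\ell\mid R_{A,K}$ iff $\bar\Delta_K,\bar P_A$ have a common root over $\overline{\F}_\ell$.

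Two details need checking here. First, multiplying $\Delta_K$ by a unit $\pm t^k$ does not matter, since $P_A(0)=q^g$ is nonzero mod $\ell$, so $t=0$ is never a common root. Second, when $\ell$ divides the leading coefficient of $\Delta_K$, the degree drop is handled by the monicity of $P_A$ as above.

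\textbf{Step 4: the two cases.} If $R_{A,K}=0$, then $\Delta_K$ and $P_A$ share a root over $\Q$ and hence a common irreducible factor over $\Z$, which stays a common factor mod every $\ell$. This gives all $\ell\neq p$. If $R_{A,K}\neq0$, Step 3 gives exactly the $\ell\neq p$ dividing $R_{A,K}$.

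The main obstacle is making the identification in Step 1/2 precise, namely that nonconstant colorings correspond to nonzero module maps from the Alexander module mod $\ell$ into $(V,T)$. I would quote Inoue's theorem for this rather than reprove it, and take care with the passage from $\Z[t^{\pm1}]$ to $\F_\ell[t^{\pm1}]$, using right-exactness of $\otimes\,\F_\ell$.
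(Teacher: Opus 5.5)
Your proposal is correct and follows essentially the paper's route. You identify nonconstant colorings with $\mathrm{Hom}_{R_\ell}(\mathcal{A}_{K,\ell},A[\ell](\overline{\F}_q))$, show this is nonzero iff $\overline{\Delta}_K$ and $\overline{P}_A$ share a nonunit factor in $R_\ell$ (the paper's Lemma \ref{reduction rel prime lem0}, which you argue via the minimal polynomial of $\pi_{A,\ell}$ instead of the invariant factors of $A[\ell](\overline{\F}_q)$), and convert this to $\ell\mid R_{A,K}$ using monicity of $P_A$. Your Step 3 supplies the degree-drop and unit-$t$ details that the paper compresses into ``since $P_A(t)$ is monic''; one small slip there is that the correction factor is a power of the leading coefficient of $P_A$, not of $\Delta_K$, as your own concrete formula shows.
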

In view of the above theorem, we say that $K$ is \emph{persistently $A$-colorable} if the set $\Sigma_A(K)$ is infinite. Then we show that this is equivalent to $\gcd(\Delta_K(t),P_A(t))\neq1$ in $\mathbb Q[t]$, and also to the existence of a simple isogeny factor $B$ of $A$ such that $\sharp B(\F_q)=1$ and $m_B(t) \mid \Delta_K(t),$ where $m_B(t)$ is the minimal polynomial of the Frobenius endomorphism $\pi_B$ of $B$. Consequently, given a fixed abelian variety $A$ over a finite field $\F_q$, persistent $A$-colorability of an oriented knot $K$ can occur only for $q\in\{2,3,4\}$ (see Corollary \ref{main cor B}), and an oriented knot $K$ with monic normalized Alexander polynomial $\Delta_K(t) \in \Z[t]$, in particular, fibered knots, are never persistently $A$-colorable (see Corollary \ref{main cor D}). Note also that the condition $\sharp B(\mathbb F_q)=1$ connects our criterion with the classical study of abelian varieties over finite fields having only one rational point, as in the work of Madan and Pal \cite{MP77}. We further refine this criterion in terms of the genus of the knot. In particular, we obtain a complete characterization of persistent $A$-colorability for genus-one knots and an explicit classification in the genus-two case. As an application, among the twist knots $J_n$ with $n \geq 2$, it turns out that the stevedore knot $J_2 = 6_1$ is the unique knot which is persistently $A$-colorable for some abelian variety $A$ over a finite field. We also show that persistent $A$-colorability is preserved under connected sums of knots in the sense that for two oriented knots $K_1$ and $K_2,$ we have that $K_1 \sharp K_2$ is persistently $A$-colorable if and only if at least one of $K_1$ and $K_2$ is persistently $A$-colorable.
\vskip 0.1in
We now turn to the arithmetic rigidity of Frobenius Alexander quandles.
Using the classification of finite Alexander quandles \cite{Nel03}, we show that for finite-dimensional $\mathbb F_\ell$-vector spaces $V,W$ and automorphisms
$S \in \mathrm{GL}(V), T \in \mathrm{GL}(W)$, we have $Q(V,S) \cong Q(W,T)$ as quandles if and only if $(V,S)$ and $(W,T)$ are isomorphic as $\mathbb F_\ell[t,t^{-1}]$-modules, where $t$ acts on $V$ (resp.\ $W$) as $S$ (resp.\ $T$). Thus the abstract quandle $Q(A[\ell](\overline{\F}_q), \pi_{A,\ell})$ determines the similarity class of $\pi_{A,\ell}$, and hence, the reduction of $P_A(t)$ modulo $\ell$. This gives rise to the following isogeny criterion.
\begin{theorem}[Corollary \ref{main cor J}]
  Let $A$ and $B$ be abelian varieties of dimension $g$ over a finite field $\F_q$ where $q=p^a$ for some prime $p$ and an integer $a \geq 1.$ Then $A$ is $\F_q$-isogenous to $B$ if and only if $Q(A[\ell](\overline{\F}_q),\pi_{A,\ell}) \cong Q(B[\ell](\overline{\F}_q), \pi_{B,\ell})$ as quandles, for all but finitely many rational primes $\ell \ne p.$  
\end{theorem}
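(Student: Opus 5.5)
The argument rests on two ingredients. The first is the quandle-rigidity statement quoted just before the corollary: for finite-dimensional $\F_\ell$-vector spaces, $Q(V,S)\cong Q(W,T)$ if and only if $(V,S)\cong(W,T)$ as $\F_\ell[t,t^{-1}]$-modules. The second is Tate's theorem: $A$ and $B$ are $\F_q$-isogenous if and only if $P_A(t)=P_B(t)$. We also use the standard fact that for $\ell\neq p$ the characteristic polynomial of $\pi_{A,\ell}$ acting on $A[\ell](\overline{\F}_q)\cong\F_\ell^{2g}$ is $P_A(t)\bmod \ell$. This holds because $P_A$ is the characteristic polynomial of $\pi_A$ on the Tate module $T_\ell A\cong\Z_\ell^{2g}$, and $A[\ell]=T_\ell A/\ell T_\ell A$.

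\emph{Forward direction.} Suppose $A$ and $B$ are $\F_q$-isogenous via $\phi:A\to B$, of degree $n$. For every prime $\ell\nmid pn$, the restriction $\phi:A[\ell]\to B[\ell]$ is injective, since its kernel lies in $\ker\phi$, which has order prime to $\ell$. Both groups have order $\ell^{2g}$, so this restriction is an isomorphism. Because $\phi$ is defined over $\F_q$, it commutes with Frobenius: $\phi\circ\pi_A=\pi_B\circ\phi$. Hence $\phi$ is an isomorphism of $\F_\ell[t,t^{-1}]$-modules $(A[\ell],\pi_{A,\ell})\cong(B[\ell],\pi_{B,\ell})$. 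Such a map is in particular an affine isomorphism of Alexander quandles, since $x\triangleright y=\pi(x)+(1-\pi)(y)$ (or the paper's convention) is preserved. This gives the quandle isomorphism for all $\ell\nmid pn$, which excludes only finitely many primes. Alternatively, one can skip the explicit map and argue through the rigidity result, but the direct map is simpler.

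\emph{Reverse direction.} Suppose the quandles are isomorphic for all $\ell\neq p$ outside a finite set $S$. For each such $\ell$, the rigidity result gives an isomorphism of $\F_\ell[t,t^{-1}]$-modules, so $\pi_{A,\ell}$ and $\pi_{B,\ell}$ are similar matrices and in particular have the same characteristic polynomial. Therefore $P_A(t)\equiv P_B(t)\pmod \ell$ for infinitely many primes $\ell$. Since $P_A-P_B\in\Z[t]$, each of its coefficients is an integer divisible by infinitely many primes, hence zero. So $P_A=P_B$, and Tate's theorem yields an $\F_q$-isogeny.

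\emph{Main obstacle.} The only substantive step is the rigidity result, which passes from an abstract quandle isomorphism to a module isomorphism. It is already stated in the paper (via Nelson's classification), so here it is just invoked. The remaining care is to identify the characteristic polynomial of $\pi_{A,\ell}$ with the reduction of $P_A$; this follows from the Tate module description together with the integrality of $P_A$.
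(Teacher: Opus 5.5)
Your proof is correct. The forward direction is the paper's argument: restrict an $\F_q$-isogeny $\varphi$ to $\ell$-torsion for $\ell\nmid\deg\varphi$, and note that it intertwines $\pi_{A,\ell}$ and $\pi_{B,\ell}$.

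The reverse direction takes a different route. The paper goes through Theorem~\ref{main thm G}:
\begin{itemize}
\item it chooses a finite set $\Sigma$ of primes with $\prod_{\ell\in\Sigma}\ell>2\binom{2g}{g}q^{g/2}$;
\item it bounds $|a_i-b_i|$ for $i\le g$ using the Hasse--Weil estimate $|a_i|\le\binom{2g}{i}q^{i/2}$;
\item it recovers the remaining coefficients from the functional equation $a_{2g-i}=q^{g-i}a_i$;
\item it then applies Tate's theorem.
\end{itemize}
You instead observe that each coefficient of $P_A-P_B$ is an integer divisible by infinitely many primes, hence zero.

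Your argument is shorter and purely algebraic. It needs neither the Hasse--Weil bound nor the functional equation, and it already gives the stronger implication from ``isomorphic for infinitely many $\ell$'', which the paper also proves. You also justify, via $A[\ell]=T_\ell A/\ell T_\ell A$, that the characteristic polynomial of $\pi_{A,\ell}$ is $P_A \bmod \ell$. The paper uses this fact implicitly in Corollary~\ref{main cor H}.

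What the paper's detour buys is effectivity. An explicit finite set of primes, or even a single $\ell>2\binom{2g}{g}q^{g/2}$, already determines the isogeny class (Corollary~\ref{main cor I}). This is what Corollary~\ref{main cor K} and the Cremona example rely on. Your divisibility argument cannot produce such a bound, and the paper's $\F_5$ example shows that some size condition on $\ell$ is genuinely needed.
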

More effectively, if $\ell \ne p$ satisfies $\ell>2\binom{2g}{g}q^{g/2},$ then the isomorphism class of the single Frobenius Alexander quandle $Q(A[\ell](\overline{\F}_q), \pi_{A,\ell})$ determines the integral Frobenius polynomial $P_A(t)$. As a consequence, we see that, for every rational prime $\ell>2\binom{2g}{g}q^{g/2},$ if $Q(A[\ell](\overline{\F}_q),\pi_{A,\ell}) \cong Q(B[\ell](\overline{\F}_q),\pi_{B,\ell})$ as quandles, then $A$ is $\F_q$-isogenous to $B.$ In particular, one sufficiently large finite Frobenius Alexander quandle determines the $\mathbb F_q$-isogeny class of the given abelian variety (see Corollary \ref{main cor I}). 
\vskip 0.1in
The paper is organized as follows: In Section 2, we recall some necessary facts on the theory of (Alexander) quandles ($\S$2.1), and the theory of abelian varieties over finite fields ($\S$2.2). In Section 3, for a given abelian variety $A$ over a finite field $\F_q$ and an oriented knot $K,$ we study the Frobenius Alexander quandle colorings associated with $A$ and $K,$ and then we introduce the notion of persistent $A$-colorability of the knot $K$, together with some knot-theoretic properties. In Section 4, we establish the arithmetic rigidity of Frobenius Alexander quandles and prove the resulting isogeny criteria. Finally, combining the results on knot colorings with the rigidity of Frobenius Alexander quandles, we obtain an isogeny criterion from the Alexander quandles associated with the coloring spaces.

\section{Preliminaries}
    \subsection{Quandles and Alexander quandles}
In this section, we briefly recall the basic definitions in the theory of quandles, and fix our notation for our later use. Our main references are \cite{Joy82, EN15, Kam02, Nel03}.
\begin{definition}\cite{EN15}
    A \emph{quandle} is a nonempty set $Q$ equipped with a binary operation
$*$ satisfying $x * x=x$ for all $x \in Q$, and such that, for every $y\in Q$, the map $R_y \colon Q \rightarrow Q$ given by $R_y(x)=x * y$ is bijective, and such that the equality
$(x * y)* z = (x* z)*(y* z)$ holds for all $x,y,z\in Q$.
\end{definition}

\begin{definition}\cite{Joy82, EN15}\label{Alexander quand def}
Let $K$ be an oriented knot and let $D$ be an oriented diagram of $K$. The \emph{fundamental quandle} $Q(K)$ of the knot $K$ is the quandle generated by the arcs of $D$, subject to the quandle relation determined by each crossing of $D$. (Note that its quandle isomorphism class is independent of the choice of the oriented knot diagram $D$ of $K$.)    
\end{definition}

\begin{definition}\cite{Kam02}
Let $K$ be an oriented knot and let $D$ be an oriented diagram of $K$. Also, let $Q$ be a quandle. Then a $Q$-coloring of $D$ is an assignment of elements of $Q$ to the arcs of $D$ satisfying the corresponding quandle relation at every crossing. The set of $Q$-colorings $\operatorname{Col}_Q (K)$ is naturally identified with the set of quandle homomorphisms $\operatorname{Hom}_{\mathrm{Quand}}(Q(K),Q)$ from $Q(K)$ to $Q.$ 
\end{definition}

Finally, we recall the class of Alexander quandles, which is one of the main objects studied in this paper.

\begin{definition}\cite{Nel03, EN15}
Let $M$ be an abelian group and let $\varphi \in \mathrm{Aut}(M)$. The
\emph{Alexander quandle} associated with the pair $(M,\varphi)$ is the quandle $Q(M,\varphi)$ whose underlying set is equal to $M$ and whose binary operation $*_{\varphi}$ is given by $x*_{\varphi} y =\varphi(x)+(1-\varphi)(y)$ for all $ x,y\in M.$
\end{definition}

Equivalently, we may view $M$ as a module over the ring of Laurent polynomials $\mathbb{Z}[t,t^{-1}]$, with $t$ acting as $\varphi$.

\subsection{Abelian varieties over finite fields}
In this section, we summarize some facts about abelian varieties over finite fields, following \cite{EGMpdf}.
\begin{definition}\cite[Definition (1.3)]{EGMpdf}
    An \emph{abelian variety} over a field $k$ is a group variety, which is complete as an algebraic variety.
\end{definition}
Recall that if $A$ is an abelian variety over a field $k,$ then the set $A(k)$ of $k$-rational points of $A$ is an abelian group (see \cite[Corollary (1.15)]{EGMpdf}). Throughout this section, $q$ is a prime power i.e.\ $q=p^a$ for some prime $p$ and an integer $a \geq 1.$

\begin{definition}\cite[Definition (5.3)]{EGMpdf}
Let $A$ and $B$ be abelian varieties over a field $k.$ A homomorphism $\varphi \colon A  \rightarrow B$ is called an \emph{isogeny} if $\varphi$ is surjective and $\dim A = \dim B$. In this case, $\ker(\varphi)$ is a finite group scheme over $k.$
\end{definition}
In particular, a homomorphism $\varphi \colon A \rightarrow A$ is an isogeny if $\varphi$ is surjective. Two important examples of isogenies are the following. 
\begin{example}\label{iso ex}
    Let $A$ be an abelian variety over a field $k.$ 
    \vskip 0.1in
    (1) (\cite[Proposition (5.9)]{EGMpdf}) Let $n \ne 0$ be an integer. Then the multiplication by $n$ map $[n]_A \colon A \rightarrow A $ is an isogeny. We write $A[n]$ for $\ker ([n]_A)$.
    \vskip 0.1in
    (2) (\cite[(16.1) and (16.2)]{EGMpdf}) Assume further that $k=\mathbb{F}_q$ is a finite field. Then the $q$-power Frobenius endomorphism $\pi_A \colon A \rightarrow A$ is an isogeny.
\end{example}
Related to two isogenies given in Example \ref{iso ex}, the following facts are known.
\begin{proposition}\label{av prop}
  Let $A$ be an abelian variety of dimension $g$ over a field $k.$
    \vskip 0.1in
    (a) (\cite[Corollary (5.11)]{EGMpdf}) If $\gcd(\mathrm{char}(k), n)=1,$ then $A[n](\overline{k}) \cong \left(\Z/n\Z \right)^{2g}$ (where $\overline{k}$ is an algebraic closure of $k$).
    \vskip 0.1in
    (b) (\cite[(16.2)]{EGMpdf}) Assume further that $k=\mathbb{F}_q$ is a finite field. If $\varphi \colon A \rightarrow A$ is an endomorphism of $A,$ then $\varphi \circ \pi_A = \pi_A \circ \varphi$. In other words, $\pi_A$ commutes with all endomorphisms $\varphi$ of $A$ so that $\pi_A$ belongs to the center of the endomorphism algebra of $A.$
\end{proposition}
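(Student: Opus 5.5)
For part (a) I would follow the standard route through the degree of the multiplication map. First, compute $\deg [n]_A = n^{2g}$. To do this, fix an ample symmetric line bundle $L$ on $A$; it exists since $A$ is projective, and replacing $L$ by $L\otimes[-1]^*L$ makes it symmetric. The theorem of the cube gives $[n]_A^*L \cong L^{\otimes n^2}$. Taking top self-intersection numbers then yields
$$\deg([n]_A)\,(L^g) = (([n]_A^*L)^g) = n^{2g}(L^g).$$
Since $L$ is ample, $(L^g)>0$, and we may cancel to get $\deg[n]_A = n^{2g}$; in particular $[n]_A$ is an isogeny, as stated in Example \ref{iso ex}(1).

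Second, show that $[n]_A$ is separable when $\gcd(\mathrm{char}(k),n)=1$. The differential of $[n]_A$ at the origin is multiplication by $n$ on the tangent space $T_0A$, and this is invertible because $n$ is a unit in $k$. Hence $[n]_A$ is étale, so its kernel is a finite étale group scheme, and
$$\sharp A[n](\overline{k}) = \deg[n]_A = n^{2g}.$$

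Third, pass from the order of the group to its structure. Apply the same count to every divisor $d\mid n$, which is also prime to $\mathrm{char}(k)$: the $d$-torsion subgroup of the finite abelian group $G=A[n](\overline{k})$ has exactly $d^{2g}$ elements. Write $G\cong\bigoplus_i \Z/n_i\Z$ with each $n_i\mid n$. For a prime $r\mid n$, the equality $\sharp G[r]=r^{2g}$ shows that $G$ has exactly $2g$ cyclic $r$-primary components. Comparing with $\sharp G[r^{e}]$, where $r^{e}\,\|\, n$, and with $\sharp G=n^{2g}$, forces each of these components to be $\Z/r^{e}\Z$. Therefore $G\cong(\Z/n\Z)^{2g}$. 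I expect the degree computation, which rests on the theorem of the cube and intersection theory, to be the main input, and I would simply quote it from \cite{EGMpdf}.

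For part (b), the plan is to use the general naturality of the relative $q$-power Frobenius. For any $\F_q$-scheme $X$, the endomorphism $\pi_X$ is the identity on the underlying topological space and acts by $f\mapsto f^q$ on $\mathcal{O}_X$. If $\varphi\colon X\to Y$ is any $\F_q$-morphism, then the comorphism $\varphi^{\#}$ is a ring homomorphism, so $\varphi^{\#}(f^q)=\varphi^{\#}(f)^q$. This gives $\varphi\circ\pi_X=\pi_Y\circ\varphi$ on the level of both spaces and sheaves. Specializing to $X=Y=A$ and $\varphi\in\mathrm{End}(A)$, we get $\varphi\circ\pi_A=\pi_A\circ\varphi$. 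Hence $\pi_A$ is central in $\mathrm{End}(A)$, and therefore also in $\mathrm{End}(A)\otimes\Q$.
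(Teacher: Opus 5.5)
Your proposal is correct, and it is essentially the standard argument: the paper gives no proof of its own and only cites Edixhoven--van der Geer--Moonen, whose proofs proceed as you do. For (a) that means $\deg[n]_A=n^{2g}$ via the theorem of the cube and intersection numbers, étaleness of $[n]_A$ because its differential is multiplication by $n$ on $T_0A$, and the structure theorem applied with $\sharp A[d](\overline{k})=d^{2g}$ for every $d\mid n$; for (b) it is the functoriality $\varphi^{\#}(f^q)=\varphi^{\#}(f)^q$ of the $q$-power Frobenius, which, as you correctly assumed, requires $\varphi$ to be defined over $\F_q$.
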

Furthermore, following \cite[\S16]{EGMpdf}, we will repeatedly use the standard facts that if $A$ is an abelian variety of dimension $g$ over a finite field $\F_q,$ then $P_A(t) \in \Z[t]$ is monic of degree $2g$, $P_A(0)=q^g$, $P_A(1)=\sharp A(\F_q),$ all complex zeros of $P_A$ have absolute value $\sqrt{q},$ and $P_A(t)$ satisfies the usual functional equation
$$t^{2g} \cdot P_A\left(\frac{q}{t}\right) =q^g \cdot P_A(t).$$



\section{Knot colorings and persistent colorability via Frobenius Alexander quandles}
Let $K$ be an oriented knot, and let $\mathcal{A}_K$ be the Alexander module of $K$. Let $R=\mathbb{Z}[t,t^{-1}]$ be the Laurent polynomial ring in a variable $t$. Then $\mathcal{A}_K$ is an $R$-module. For each rational prime $\ell$, let $R_{\ell} = R \otimes_{\Z} \F_{\ell} \cong \F_{\ell}[t,t^{-1}]$, and let $\mathcal{A}_{K,\ell} = \mathcal{A}_K \otimes_{R} R_{\ell} \cong \mathcal{A}_K \otimes_{\Z} \F_{\ell}.$ Then $\mathcal{A}_{K, \ell}$ is naturally an $R_{\ell}$-module. In this situation, we first recall the following fundamental notion.
\begin{definition}\cite[p.206]{Rol03}
    The \emph{Alexander polynomial} $\Delta_K (t)$ is defined to be the order of the Alexander module $\mathcal{A}_K$, which is determined up to multiplication by an element in $R$ of the form $\pm t^n$ for some $n \in \Z$.
\end{definition}
In the sequel, we choose $\Delta_{K}(t)$ to be the unique representative of the Alexander polynomial of $K$ having nonzero constant term and positive leading coefficient. In particular, with this choice, $\Delta_K(t) \in \Z[t].$ (We call this $\Delta_K(t) \in \Z[t]$ the \emph{normalized Alexander polynomial of $K$}.) We will continually use the standard facts that $\Delta_K(1)= \pm 1,$ $\deg \Delta_K \leq 2 g(K)$, where $g(K)$ denotes the genus of $K,$ and that $\Delta_K(t)$ is primitive and symmetric \cite{Rol03}. 
\vskip 0.1in
Now, let $q=p^a$ for some prime $p$ and an integer $a \geq 1$, and let $\F_q$ be a finite field with $q$-elements with an algebraic closure $\overline{\F}_q$. Let $A$ be an abelian variety of dimension $g$ over $\F_q,$ and let $\pi_A$ denote the Frobenius endomorphism of $A.$ For each prime $\ell \ne p,$ let $\pi_{A,\ell}$ be the restriction of $\pi_A$ to $A[\ell](\overline{\F}_q).$ Also, let $P_A(t)$ denote the characteristic polynomial of $\pi_A.$ 

Let $V_{K,A,\ell} = \mathrm{Hom}_{\mathrm{Quand}}(Q(K),Q(A[\ell](\overline{\F}_q),\pi_{A,\ell}))$ be the set of quandle homomorphisms from the fundamental quandle $Q(K)$ of $K$ to the Alexander quandle $Q(A[\ell](\overline{\F}_q),\pi_{A,\ell})$. Then $V_{K,A,\ell}$ is an abelian group under the binary operation $+$ defined by $(f+g)(x)=f(x)+g(x)$ for all $f, g \in V_{K,A,\ell}$ and for all $x \in Q(K)$, and in fact, $V_{K,A,\ell}$ is also an Alexander quandle together with the quandle operation $*_{\widetilde{\pi}_{A,\ell}}$ defined by $f *_{\widetilde{\pi}_{A,\ell}} g = \widetilde{\pi}_{A,\ell} (f) + (1-\widetilde{\pi}_{A,\ell})(g)$ for all $f, g \in V_{K,A,\ell}$, where $\widetilde{\pi}_{A,\ell} (f)=\pi_{A,\ell} \circ f$ for any $f \in V_{K,A,\ell}.$ Let $x \in Q(A[\ell](\overline{\F}_q),\pi_{A,\ell})$ be an arbitrary element, and let $c_x \colon Q(K) \rightarrow Q(A[\ell](\overline{\F}_q),\pi_{A,\ell})$ be the constant map given by $c_x (a) = x$ for all $a \in Q(K).$ Then $c_x \in V_{K,A,\ell},$ and then since $\sharp V_{K,A,\ell} \geq \sharp A[\ell](\overline{\F}_q)  = \ell^{2g} \geq 2,$ it follows that $V_{K,A,\ell}$ is nontrivial.
\vskip 0.1in
The following fact considers the connectedness of the Alexander quandle $V_{K,A,\ell}.$
\begin{lemma}\label{conn lem}
    For a rational prime $\ell \ne p$, $V_{K,A,\ell}$ is a connected Alexander quandle if and only if $\ell$ does not divide $\sharp A(\F_q)$.
\end{lemma}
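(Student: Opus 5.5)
We plan to reduce connectedness to a linear-algebra condition on $1-\widetilde{\pi}_{A,\ell}$, then transfer that condition from $V_{K,A,\ell}$ to $A[\ell](\overline{\F}_q)$, and finally interpret it arithmetically.

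\emph{Step 1 (general criterion).} For a finite Alexander quandle $Q(M,\varphi)$ we show that $Q(M,\varphi)$ is connected if and only if $1-\varphi$ is surjective, hence bijective since $M$ is finite. Two computations suffice.
\begin{itemize}
\item For every $x,y\in M$ we have $R_y(x)-x=(\varphi-1)(x-y)\in \operatorname{Im}(1-\varphi)$. The same holds for $R_y^{-1}$, because $R_y^{-1}(x)-x=-(R_y(x')-x')$ with $x'=R_y^{-1}(x)$. So every inner automorphism preserves the cosets of $\operatorname{Im}(1-\varphi)$, and connectedness forces $\operatorname{Im}(1-\varphi)=M$.
\item Conversely, $R_y(0)=(1-\varphi)(y)$, so the orbit of $0$ contains $\operatorname{Im}(1-\varphi)$. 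If $1-\varphi$ is onto, the orbit of $0$ is all of $M$, and the quandle is connected.
\end{itemize}
We apply this to $V_{K,A,\ell}$ with $\varphi=\widetilde{\pi}_{A,\ell}$. Thus $V_{K,A,\ell}$ is connected if and only if $\ker(1-\widetilde{\pi}_{A,\ell})=0$ on $V_{K,A,\ell}$.

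\emph{Step 2 (transfer to $A[\ell]$).} Fix the arcs $a_1,\dots,a_n$ of a diagram of $K$. Evaluation $f\mapsto (f(a_i))_i$ embeds $V_{K,A,\ell}$ into $A[\ell](\overline{\F}_q)^n$, and under this embedding $\widetilde{\pi}_{A,\ell}$ acts diagonally as $\pi_{A,\ell}$.
\begin{itemize}
\item If $1-\pi_{A,\ell}$ is injective on $A[\ell](\overline{\F}_q)$, then it is injective on the product, hence on $V_{K,A,\ell}$.
\item Conversely, if $0\neq x\in\ker(1-\pi_{A,\ell})$, then the constant coloring $c_x\in V_{K,A,\ell}$ is a nonzero element of $\ker(1-\widetilde{\pi}_{A,\ell})$.
\end{itemize}
Hence $V_{K,A,\ell}$ is connected if and only if $\pi_{A,\ell}$ has no nonzero fixed vector.

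\emph{Step 3 (arithmetic).} The fixed points of $\pi_A$ on $A(\overline{\F}_q)$ are exactly the rational points $A(\F_q)$. Therefore $\ker(1-\pi_{A,\ell})=A[\ell](\overline{\F}_q)\cap A(\F_q)=A(\F_q)[\ell]$. Since $A(\F_q)$ is a finite abelian group, Cauchy's theorem shows that $A(\F_q)[\ell]\neq0$ if and only if $\ell\mid \sharp A(\F_q)$.

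As a cross-check, one can argue via determinants instead: $\det(1-\pi_{A,\ell})\equiv P_A(1)=\sharp A(\F_q)\pmod \ell$, since the characteristic polynomial of $\pi_{A,\ell}$ is $P_A$ reduced modulo $\ell$. Citing this requires the standard compatibility of $P_A$ with the $\ell$-adic Tate module, which is why we prefer the fixed-point argument.

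The step needing most care is Step 1, specifically the claim that the full inner orbits lie in cosets of $\operatorname{Im}(1-\varphi)$ and not merely the images under single right translations. The computation for $R_y^{-1}$ in Step 1 is what closes this gap. Everything else is formal.
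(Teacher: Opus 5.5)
Your proof is correct and follows the same route as the paper. Both arguments reduce connectedness to bijectivity of $1-\widetilde{\pi}_{A,\ell}$, transfer injectivity to $1-\pi_{A,\ell}$ on $A[\ell](\overline{\F}_q)$ (pointwise action in one direction, constant colorings in the other), and identify $\ker(1-\pi_{A,\ell})$ with $A(\F_q)[\ell]$. The one difference is that you derive the criterion ``connected iff $1-\varphi$ is surjective'' directly from the inner-automorphism orbits, whereas the paper cites Nelson for it.
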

\begin{proof}
     In light of \cite[Definition 2.3]{Nel03}, the Alexander quandle $V_{K,A,\ell}$ is connected if and only if $1- \widetilde{\pi}_{A,\ell} \colon V_{K,A,\ell} \rightarrow V_{K,A,\ell}$ is surjective. Since $V_{K,A,\ell}$ is finite, the latter is equivalent to $1-\widetilde{\pi}_{A,\ell}$ being injective. 
     \vskip 0.1in
     We first note that the map $1-\widetilde{\pi}_{A,\ell}$ is injective if and only if the map $1-\pi_{A,\ell} \colon A[\ell](\overline{\F}_q) \rightarrow A[\ell](\overline{\F}_q)$ is injective. Indeed, by definition, if $1-\pi_{A,\ell}$ is injective, then $1-\widetilde{\pi}_{A,\ell}$ is injective. For the converse, let $x \in \ker(1-\pi_{A,\ell}).$ Consider the constant map $c_x \colon Q(K) \rightarrow Q(A[\ell](\overline{\F}_q), \pi_{A,\ell})$ given by $c_x(a)=x$ for all $a \in Q(K)$, which is an element in $V_{K,A,\ell}$. Then $c_x \in \ker(1-\widetilde{\pi}_{A,\ell}),$ and since $1-\widetilde{\pi}_{A,\ell}$ is injective, it follows that $c_x$ is the zero map so that $x =0.$ Thus $1-\pi_{A,\ell}$ is injective. 
     \vskip 0.1in
     Now, we claim that $1-\pi_{A,\ell}$ is injective if and only if $\ell$ does not divide $\sharp A(\F_q)$. Indeed, since $\pi_{A,\ell}$ fixes all $\F_q$-rational points of $A,$ we have 
     $$\ker(1-\pi_{A,\ell})=\{x \in A[\ell](\overline{\F}_q) : \pi_{A,\ell}(x)=x \}=A[\ell](\F_q).$$
     It follows that $1-\pi_{A,\ell}$ is injective if and only if $A[\ell](\F_q) = 0$, and the latter is equivalent to the fact that $\ell$ does not divide $\sharp A(\F_q).$
     \vskip 0.1in
     This completes the proof.
\end{proof}
\begin{remark}\label{Q(Mell) conn lem}
As we can see in the proof of Lemma \ref{conn lem}, we also have that, for a rational prime $\ell \ne p,$ $Q(A[\ell](\overline{\F}_q),\pi_{A,\ell})$ is a connected Alexander quandle if and only if $\ell$ does not divide $\sharp A(\F_q).$
\end{remark}

Since $\sharp A(\F_q) =P_A(1)$, we obtain the following corollary.
\begin{corollary}
    If $P_A(1)=1,$ then $V_{K,A,\ell}$ is a connected Alexander quandle for any prime $\ell \ne p$ and for all oriented knots $K.$
\end{corollary}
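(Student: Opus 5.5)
This is a direct consequence of Lemma \ref{conn lem} combined with the standard identity $\sharp A(\F_q)=P_A(1)$ recalled in \S2.2. The plan is simply to check that the hypothesis of Lemma \ref{conn lem} holds for every admissible prime $\ell$ and every knot $K$.

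First I fix an arbitrary oriented knot $K$ and an arbitrary rational prime $\ell\neq p$. From \S2.2 we have $P_A(1)=\sharp A(\F_q)$, so the assumption $P_A(1)=1$ gives $\sharp A(\F_q)=1$. No prime divides $1$, so in particular $\ell\nmid\sharp A(\F_q)$.

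By Lemma \ref{conn lem}, the condition $\ell\nmid\sharp A(\F_q)$ is equivalent to $V_{K,A,\ell}$ being a connected Alexander quandle. Hence $V_{K,A,\ell}$ is connected. Since $K$ and $\ell\neq p$ were arbitrary, this gives the statement for all such pairs.

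There is no real obstacle here. The only point to watch is that Lemma \ref{conn lem} is stated uniformly in $K$: its criterion depends only on $A$ and $\ell$, because the argument passes through the constant colorings $c_x$ and the identification $\ker(1-\pi_{A,\ell})=A[\ell](\F_q)$. So the conclusion holds for all oriented knots simultaneously. By Remark \ref{Q(Mell) conn lem}, the same reasoning also shows that $Q(A[\ell](\overline{\F}_q),\pi_{A,\ell})$ itself is connected for every $\ell\neq p$.
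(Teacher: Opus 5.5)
Your proposal is correct and matches the paper's argument: the paper derives this corollary immediately from Lemma \ref{conn lem} combined with the identity $\sharp A(\F_q)=P_A(1)$, exactly as you do.
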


\begin{example}\label{1rationalpt ex}
Let $A \colon y^2 +y = x^3 +x +1$ be an elliptic curve over $\F_2$. Then we have $P_A(1)=\sharp A(\F_2) =1$, and hence, the Alexander quandle $V_{K,A,\ell}$ is connected for all odd primes $\ell $ and all oriented knots $K.$    
\end{example}

Now, let $\mathcal{C}_{A,\ell}(K)=\mathrm{Hom}_{R_\ell} (\mathcal A_{K,\ell},A[\ell](\overline{\mathbb F}_q))$. Fix an oriented diagram $D$ of $K$ with arcs $a_1,\cdots,a_r$, and fix an arc $a$. Then evaluation at $a$ gives an $R_\ell$-linear map
$\mathrm{ev}_a \colon V_{K,A,\ell}\to A[\ell](\overline{\mathbb F}_q)$, which is split by the constant colorings. By the standard based Alexander presentation, $\ker(\mathrm{ev}_a)\cong \mathcal{C}_{A,\ell}(K)$, where a coloring $f$ corresponds to $\phi_f$ defined by $\phi_f(\overline{a_j} )=f(a_j)-f(a)$, with $\overline{a_j} $ denoting the class of $a_j-a$. Hence the map $\psi_a \colon V_{K,A,\ell} \rightarrow A[\ell](\overline{\F}_q) \oplus \mathcal{C}_{A,\ell}(K),$ defined by $\psi_{a}(f)=(f(a), \phi_{f})$ is an $R_\ell$-module isomorphism, with the inverse $\psi_a^{-1}(m,\phi)(a_j)=m+\phi(\overline{a_j})$. Consequently, $V_{K,A,\ell}\cong A[\ell](\overline{\mathbb F}_q)\oplus \mathcal{C}_{A,\ell}(K)$ as $R_{\ell}$-modules, non-canonically.

\begin{lemma}\label{reduction rel prime lem0}
   Let $\ell \ne p$ be a rational prime, and let $\overline{\Delta}_K (t)$ and $\overline{P}_A (t)$ be the reductions of $\Delta_K(t)$ and $P_A(t)$ modulo $\ell,$ respectively. Then $\mathcal{C}_{A,\ell}(K) \ne 0$ if and only if $\overline{\Delta}_K (t)$ and $\overline{P}_A (t)$ are not relatively prime in $R_{\ell}.$
\end{lemma}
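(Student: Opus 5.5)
We will reduce both modules to cyclic decompositions over the PID $\F_\ell[t]$ and compare supports. Throughout, set $M_\ell=A[\ell](\overline{\F}_q)$, viewed as an $R_\ell$-module with $t$ acting as $\pi_{A,\ell}$.

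First we identify $M_\ell$. By Proposition \ref{av prop}(a), $M_\ell\cong\F_\ell^{2g}$. The characteristic polynomial of $\pi_{A,\ell}$ on $M_\ell$ is $\overline{P}_A(t)$; this is the standard fact that $P_A$ is the characteristic polynomial of $\pi_A$ on the Tate module $T_\ell A$, reduced modulo $\ell$. Since $\overline{P}_A(0)=q^g\not\equiv 0 \pmod{\ell}$, the element $t$ acts invertibly, so $M_\ell$ is a finite torsion module over the PID $\F_\ell[t]$ localized at $t$, i.e.\ over $R_\ell$. Its set of primary components consists of the irreducible $f\in\F_\ell[t]$ with $f\mid\overline{P}_A$; each such $f$ actually occurs, since the characteristic polynomial is the product of the invariant factors.

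Next we describe $\mathcal{A}_{K,\ell}$. Choose a square presentation matrix of $\mathcal{A}_K$ over $R$, for instance $tV-V^T$ from a Seifert matrix $V$. Its determinant is $\Delta_K$ up to a unit $\pm t^n$. Tensoring with $\F_\ell$ yields a square presentation of $\mathcal{A}_{K,\ell}$ over the PID $R_\ell$ with determinant $\overline{\Delta}_K$, up to a unit. Since $\Delta_K(1)=\pm1$, we have $\overline{\Delta}_K\neq0$. By Smith normal form, $\mathcal{A}_{K,\ell}\cong\bigoplus_i R_\ell/(d_i)$ with $\prod d_i=\overline{\Delta}_K$ up to a unit. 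So $\mathcal{A}_{K,\ell}$ is a finite torsion $R_\ell$-module whose order ideal is $(\overline{\Delta}_K)$, and its primary components are exactly the irreducible factors of $\overline{\Delta}_K$ in $R_\ell$.

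With both decompositions in hand, the computation is
$$\mathrm{Hom}_{R_\ell}\bigl(R_\ell/(d),M_\ell\bigr)\cong M_\ell[d]=\{m\in M_\ell: d\cdot m=0\}.$$
This module is nonzero if and only if $d$ and the order of $M_\ell$ share an irreducible factor. To see this, let $f$ be an irreducible factor of both. Then $M_\ell$ contains a copy of $R_\ell/(f^k)$ for some $k\ge 1$, and the element $f^{k-1}$ in it is killed by $f$, hence by $d$. Conversely, if $\gcd(d,\overline{P}_A)=1$, then $d$ acts invertibly on $M_\ell$, so $M_\ell[d]=0$. Summing over the cyclic summands of $\mathcal{A}_{K,\ell}$ and using $\prod d_i=\overline{\Delta}_K$, we get $\mathcal{C}_{A,\ell}(K)\neq0$ if and only if some irreducible factor of $\overline{\Delta}_K$ divides $\overline{P}_A$. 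This is exactly the statement that $\overline{\Delta}_K$ and $\overline{P}_A$ are not relatively prime in $R_\ell$. Units $\pm t^n$ do not affect this, and $t\nmid\overline{P}_A$.

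The main obstacle is justifying the two identifications, since everything else is routine PID algebra. For $M_\ell$, we rely on the standard fact that $\det(t-\pi_{A,\ell})=\overline{P}_A$; it follows from $T_\ell A/\ell T_\ell A\cong A[\ell]$ together with the definition of $P_A$ via $T_\ell A$ in \cite[\S16]{EGMpdf}. For $\mathcal{A}_{K,\ell}$, the key point is that the order ideal commutes with base change for a square presentation. We would make sure to use the Alexander module defined with the same convention as the one in $V_{K,A,\ell}\cong M_\ell\oplus\mathcal{C}_{A,\ell}(K)$, i.e.\ the based presentation from the Fox calculus. After deleting one column, this presentation becomes square with determinant $\pm t^n\Delta_K$.
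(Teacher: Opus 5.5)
Your argument is correct and is essentially the paper's. Both proofs use the structure theorem over the PID $R_\ell$ to split $\mathrm{Hom}_{R_\ell}(\mathcal{A}_{K,\ell},A[\ell](\overline{\F}_q))$ into Homs between cyclic torsion modules, note that each is nonzero exactly when the relevant polynomials share an irreducible factor, and identify the orders with $\overline{\Delta}_K$ and $\overline{P}_A$. You additionally justify those order identifications (via $T_\ell A/\ell T_\ell A\cong A[\ell]$, base change of a square presentation, and $\overline{\Delta}_K(1)=\pm1\neq 0$), which the paper takes for granted. One small slip: a Wirtinger--Fox matrix becomes square only after deleting a column \emph{and} a redundant row, but your Seifert-matrix presentation already suffices.
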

\begin{proof}
Since $R_{\ell}$ is a PID, we may write
$$ \mathcal A_{K,\ell}
\cong
\bigoplus_{i=1}^r R_\ell/(f_i(t))
\quad\text{and}\quad
A[\ell](\overline{\mathbb F}_q)
\cong
\bigoplus_{j=1}^s R_\ell/(g_j(t))$$
for some integers $r \geq 0$ and $s \geq 1$, and $f_i(t), g_j(t) \in R_{\ell}$, by the structure theorem for finitely generated torsion $R_\ell$-modules. (Here, we use the convention that $\mathcal{A}_{K,\ell}=0$ for $r=0$ in the above direct sum.) The orders of these $R_\ell$-modules are, up to units,
$\overline{\Delta}_K(t)$ and $\overline P_A(t)$, respectively, and hence, we have
\begin{equation}\label{eqn 1}
  \overline{\Delta}_K(t)=\prod_{i=1}^r f_i(t),
\quad \textrm{and} \quad 
\overline P_A(t)=\prod_{j=1}^s g_j(t).  
\end{equation}
up to units in $R_\ell$. Hence it follows that
$$ \mathcal C_{A,\ell}(K)
=
\mathrm{Hom}_{R_\ell}
(\mathcal A_{K,\ell},A[\ell](\overline{\mathbb F}_q))
\cong
\bigoplus_{i,j}
\mathrm{Hom}_{R_\ell}
(R_\ell/(f_i(t)),R_\ell/(g_j(t))).$$
Now, since $\mathrm{Hom}_{R_{\ell}}(R_{\ell}/(f_i), R_{\ell}/(g_j)) \ne 0$ if and only if $f_i$ and $g_j$ are not relatively prime in $R_{\ell}$, for any $i,j$, it follows that $\mathcal C_{A,\ell}(K)\neq0$ if and only if $f_i$ and $g_j$ have a nonunit common factor for some $i,j$,
which is equivalent to saying, in view of (\ref{eqn 1}), that $\overline{\Delta}_K(t)$ and $\overline{P}_A(t)$ not being relatively prime in $R_{\ell}$.
\vskip 0.1in
This completes the proof.
\end{proof}

\begin{theorem}\label{Rl mod isom thm}
The following statements are equivalent.
\vskip 0.1in
(i) $P_A(t)$ and $\Delta_K(t)$ are relatively prime in $\mathbb Q[t].$
\vskip 0.1in
(ii) $\mathcal C_{A,\ell}(K)=0$ for some rational prime
$\ell\neq p$.
\vskip 0.1in
(iii) $\mathcal C_{A,\ell}(K)=0$ for all but finitely many
rational primes $\ell\neq p$.
\vskip 0.1in
\noindent Equivalently, in (ii) and (iii), one may replace
$\mathcal C_{A,\ell}(K)=0$ by
$$V_{K,A,\ell}\cong A[\ell](\overline{\mathbb F}_q)$$
as $R_\ell$-modules.
\end{theorem}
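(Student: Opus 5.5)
By Lemma \ref{reduction rel prime lem0}, the whole statement reduces to comparing coprimality of $\Delta_K(t)$ and $P_A(t)$ over $\mathbb Q[t]$ with coprimality of their reductions $\overline{\Delta}_K(t)$ and $\overline{P}_A(t)$ in $R_\ell=\F_\ell[t,t^{-1}]$. The tool for this comparison is the resultant $R_{A,K}=\operatorname{Res}(\Delta_K(t),P_A(t))\in\Z$.

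Two facts make reduction modulo $\ell$ compatible with the resultant. First, $P_A$ is monic, so $\deg\overline P_A=2g$ for every $\ell$. Second, $\Delta_K(1)=\pm1$ gives $\overline{\Delta}_K\neq 0$ for every $\ell$. Writing $\Delta_K$ with leading coefficient $c$, the Sylvester-matrix description of the resultant gives:
\begin{itemize}
\item if $\ell\nmid c$, then $\overline{R_{A,K}}=\operatorname{Res}(\overline\Delta_K,\overline P_A)$ computed in $\F_\ell[t]$;
\item if $\ell\mid c$, then $\overline{R_{A,K}}$ equals the resultant of the reductions multiplied by a power of $\pm1$, because $P_A$ is monic.
\end{itemize}
In either case, $\ell\mid R_{A,K}$ if and only if $\overline\Delta_K$ and $\overline P_A$ have a common root in $\overline{\F}_\ell$. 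Equivalently, $\ell\mid R_{A,K}$ if and only if they have a nonconstant common factor in $\F_\ell[t]$.

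The ring $R_\ell$ is a localization of $\F_\ell[t]$ at $t$, so passing to $R_\ell$ changes nothing except possibly factors of $t$. Since $P_A(0)=q^g$ and $\ell\neq p$, we have $\overline P_A(0)\ne0$, so $t\nmid\overline P_A$. Hence any common nonunit factor in $\F_\ell[t]$ remains a nonunit in $R_\ell$, and conversely. I expect this reduction step, namely handling possible degree drop of $\Delta_K$ and the unit $t$, to be the only delicate point.

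With this, I will prove the implications in a cycle.
\begin{itemize}
\item \textbf{(i)$\Rightarrow$(iii).} If the polynomials are coprime in $\mathbb Q[t]$, then $R_{A,K}\neq0$. So only finitely many $\ell$ divide it. For every other $\ell\ne p$, the reductions are coprime in $R_\ell$, and Lemma \ref{reduction rel prime lem0} gives $\mathcal C_{A,\ell}(K)=0$.
\item \textbf{(iii)$\Rightarrow$(ii).} This is trivial, since there are infinitely many primes $\ell\ne p$.
\item \textbf{(ii)$\Rightarrow$(i).} I argue by contraposition. Suppose $h=\gcd(\Delta_K,P_A)$ in $\mathbb Q[t]$ is nonconstant. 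By Gauss's lemma, take $h\in\Z[t]$ primitive dividing both polynomials in $\Z[t]$. Since $P_A$ is monic, $h$ has leading coefficient $\pm1$. Hence $\overline h$ is nonconstant for every $\ell$ and divides both reductions. It is a nonunit in $R_\ell$ because $h\mid P_A$ forces $t\nmid\overline h$. Lemma \ref{reduction rel prime lem0} then gives $\mathcal C_{A,\ell}(K)\neq0$ for every $\ell\ne p$, so (ii) fails.
\end{itemize}

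For the final assertion, use the isomorphism $V_{K,A,\ell}\cong A[\ell](\overline{\F}_q)\oplus\mathcal C_{A,\ell}(K)$ established above. If $\mathcal C_{A,\ell}(K)=0$, the isomorphism $V_{K,A,\ell}\cong A[\ell](\overline{\F}_q)$ is immediate. Conversely, such an isomorphism forces equal cardinalities, and hence $\sharp\,\mathcal C_{A,\ell}(K)=1$, that is, $\mathcal C_{A,\ell}(K)=0$. So the replacement is valid in both (ii) and (iii).
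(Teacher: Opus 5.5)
Your proof is correct and follows the paper's cycle (i)$\Rightarrow$(iii)$\Rightarrow$(ii)$\Rightarrow$(i). The contrapositive for (ii)$\Rightarrow$(i) is essentially the paper's: a common factor $h\in\Z[t]$ with unit leading coefficient, whose reduction is a nonunit in $R_\ell$ because $\ell\nmid h(0)$.

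The only real difference is in (i)$\Rightarrow$(iii). There you use the resultant $R_{A,K}$ instead of the paper's B\'ezout identity $a(t)P_A(t)+b(t)\Delta_K(t)=N$ with cleared denominators; your degree-drop check is fine because $P_A$ is monic. This costs a little more bookkeeping, but it shows the exceptional primes $\ell\neq p$ are exactly those dividing $R_{A,K}$, which the paper only extracts later in Theorem~\ref{main thm A}. Your cardinality argument for the $V_{K,A,\ell}$ reformulation also spells out a converse that the paper leaves implicit.
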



\begin{proof}
Suppose first that $P_A(t)$ and $\Delta_K(t)$ are relatively prime
in $\mathbb Q[t]$. Since $\Q[t]$ is a PID, there exist $a_0(t),b_0(t)\in\mathbb Q[t]$ such that
$$a_0(t)P_A(t)+b_0(t)\Delta_K(t)=1.$$
Then by clearing denominators, we may write
$$a(t) P_A(t) + b(t) \Delta_K(t)= N$$
for some $a(t), b(t) \in \Z[t]$ and a nonzero integer $N.$ Hence, for every prime $\ell\nmid N$, reduction modulo $\ell$ gives
\[
\overline a(t)\,\overline P_A(t)
+
\overline b(t)\,\overline\Delta_K(t)
=
\overline N\neq0.
\]
Since $\overline N$ is a unit in $\mathbb F_\ell$, it follows that
$\overline P_A(t)$ and $\overline\Delta_K(t)$ are relatively prime
in $R_\ell$. Hence, we get $\mathcal C_{A,\ell}(K)=0$ for all but finitely many primes $\ell\neq p$ by Lemma \ref{reduction rel prime lem0}. This proves (i) $\Rightarrow$ (iii).
\vskip 0.1in
Clearly, (iii) implies (ii). It remains to prove that (ii) implies (i). To this aim, suppose on the contrary that $P_A(t)$ and $\Delta_K(t)$ are not relatively prime in $\mathbb Q[t]$. Since $P_A(t)$ is monic, they have a nonconstant monic common factor $h(t)\in\mathbb Z[t]$. Write $$P_A(t)=h(t)u(t)$$
with $u(t)\in\mathbb Z[t]$. Since $P_A(0)=q^g = p^{ag}$, we have $h(0)u(0)=p^{ag}.$ Thus, for every prime $\ell\neq p$, we have $\ell\nmid h(0)$. Consequently, the reduction $\overline h(t)$ is a nonconstant nonunit of $R_\ell$. Since $\overline h(t)$ divides both
$\overline P_A(t)$ and $\overline\Delta_K(t)$, these two
polynomials are not relatively prime in $R_\ell$. By Lemma \ref{reduction rel prime lem0} again, it follows that $\mathcal C_{A,\ell}(K)\neq0$ for every prime $\ell\neq p$, which contradicts our assumption (ii).
\vskip 0.1in

Finally, the equivalent assertion concerning $V_{K,A,\ell}$ follows
from the previously established isomorphism $V_{K,A,\ell} \cong A[\ell](\overline{\mathbb F}_q) \oplus \mathcal C_{A,\ell}(K),$ as $R_{\ell}$-modules.
\vskip 0.1in
This completes the proof.
\end{proof}

\begin{corollary}\label{quand isom cor}
    $P_A(t)$ and $\Delta_K(t)$ are relatively prime in $\Q[t]$ if and only if the quandles $Q(V_{K,A,\ell}, \widetilde{\pi}_{A,\ell})$ and $Q(A[\ell](\overline{\F}_q),\pi_{A,\ell})$ are isomorphic for all but finitely many primes $\ell \ne p.$
\end{corollary}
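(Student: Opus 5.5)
The plan is to read the corollary off Theorem \ref{Rl mod isom thm}. In one direction an $R_\ell$-module isomorphism is automatically a quandle isomorphism. In the other direction a simple cardinality count suffices, so the classification of Alexander quandles is not needed here.

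\emph{($\Rightarrow$)} Suppose $P_A(t)$ and $\Delta_K(t)$ are relatively prime in $\Q[t]$. By (i)$\Rightarrow$(iii) of Theorem \ref{Rl mod isom thm}, for all but finitely many primes $\ell\neq p$ there is an $R_\ell$-module isomorphism
$$\Phi\colon V_{K,A,\ell}\to A[\ell](\overline{\F}_q).$$
Concretely, $\Phi$ is the composite of $\psi_a$ with the identification $\mathcal C_{A,\ell}(K)=0$. Being $R_\ell$-linear, $\Phi$ is additive and satisfies $\Phi\circ\widetilde{\pi}_{A,\ell}=\pi_{A,\ell}\circ\Phi$, since $t$ acts by $\widetilde\pi_{A,\ell}$ on the source and by $\pi_{A,\ell}$ on the target. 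Hence
$$\Phi(f*_{\widetilde{\pi}_{A,\ell}}g)=\Phi(\widetilde\pi_{A,\ell}f)+\Phi(g)-\Phi(\widetilde\pi_{A,\ell}g)=\pi_{A,\ell}\Phi(f)+(1-\pi_{A,\ell})\Phi(g)=\Phi(f)*_{\pi_{A,\ell}}\Phi(g).$$
So $\Phi$ is a bijective quandle homomorphism, and therefore a quandle isomorphism $Q(V_{K,A,\ell},\widetilde\pi_{A,\ell})\cong Q(A[\ell](\overline{\F}_q),\pi_{A,\ell})$.

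\emph{($\Leftarrow$)} I argue by contraposition. Suppose $P_A(t)$ and $\Delta_K(t)$ are not relatively prime in $\Q[t]$. The proof of (ii)$\Rightarrow$(i) in Theorem \ref{Rl mod isom thm} actually shows more: $\mathcal C_{A,\ell}(K)\neq 0$ for \emph{every} prime $\ell\neq p$. This comes from the monic common factor $h$ with $\ell\nmid h(0)$, combined with Lemma \ref{reduction rel prime lem0}. By the $R_\ell$-module decomposition $V_{K,A,\ell}\cong A[\ell](\overline{\F}_q)\oplus\mathcal C_{A,\ell}(K)$, we then get
$$\sharp V_{K,A,\ell}=\ell^{2g}\cdot\sharp\,\mathcal C_{A,\ell}(K)\ \geq\ \ell^{2g+1}>\ell^{2g}=\sharp A[\ell](\overline{\F}_q).$$
Thus the two quandles have different cardinalities, and so they are not isomorphic for every prime $\ell\ne p$. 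Since there are infinitely many such primes, this contradicts isomorphism for all but finitely many $\ell$.

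The only step needing care is the converse. One must use the strong form of the non-coprime case, namely failure for all $\ell\ne p$ rather than just for some $\ell$. One must also note that a quandle isomorphism is in particular a bijection of underlying sets, so the count $\ell^{2g}\cdot\sharp\,\mathcal C_{A,\ell}(K)$ against $\ell^{2g}$ settles the matter. No deeper rigidity statement is required.
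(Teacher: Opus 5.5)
Your proposal is correct and follows essentially the same route as the paper. In the forward direction you turn the $R_\ell$-module isomorphism from Theorem \ref{Rl mod isom thm} into a quandle isomorphism via the intertwining relation, and in the converse you use the cardinality count against $V_{K,A,\ell}\cong A[\ell](\overline{\F}_q)\oplus\mathcal C_{A,\ell}(K)$; the only difference is that you phrase the converse by contraposition, which is exactly the contrapositive of (ii)$\Rightarrow$(i) that the paper applies directly at a single prime where the quandles are isomorphic.
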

\begin{proof}
    Suppose that $P_A(t)$ and $\Delta_K(t)$ are relatively prime in $\Q[t]$. Then by Theorem \ref{Rl mod isom thm}, there exists an $R_{\ell}$-module isomorphism $\Phi_{K,A,\ell} \colon V_{K,A,\ell} \rightarrow A[\ell](\overline{\F}_q)$ for all but finitely many primes $\ell \ne p.$ Since $t$ acts on $V_{K,A,\ell}$ (resp.\ on $A[\ell](\overline{\F}_q)$) as $\widetilde{\pi}_{A,\ell}$ (resp.\ $\pi_{A,\ell})$, it follows that we have
    $$ \pi_{A,\ell} \circ \Phi_{K,A,\ell} =  \Phi_{K,A,\ell} \circ \widetilde{\pi}_{A,\ell}, $$
    and hence, $\Phi_{K,A,\ell}$ is indeed a quandle isomorphism. Conversely, suppose that for all but finitely many primes $\ell \ne p,$ $Q(V_{K,A,\ell}, \widetilde{\pi}_{A,\ell}) \cong Q(A[\ell](\overline{\F}_q),\pi_{A,\ell})$ as quandles. In view of Theorem \ref{Rl mod isom thm}, it suffices to show that $V_{K,A,\ell} \cong A[\ell](\overline{\F}_q)$ as $R_{\ell}$-modules. Indeed, by our assumption, we have $\sharp V_{K,A,\ell}=\sharp A[\ell](\overline{\F}_q).$ Since $V_{K,A,\ell} \cong A[\ell](\overline{\F}_q) \oplus \mathrm{Hom}_{R_{\ell}}(\mathcal{A}_{K,\ell}, A[\ell](\overline{\F}_q))$ as $R_{\ell}$-modules, as established in the above, it follows that $\mathrm{Hom}_{R_{\ell}}(\mathcal{A}_{K,\ell}, A[\ell](\overline{\F}_q))=0.$ Then $V_{K,A,\ell} \cong A[\ell](\overline{\F}_q)$ as $R_{\ell}$-modules, and hence, $P_A(t)$ and $\Delta_K(t)$ are relatively prime in $\Q[t]$ again by Theorem \ref{Rl mod isom thm}.
    \vskip 0.1in
    This completes the proof.
\end{proof}
\vskip 0.1in

Now, for fixed $A$ and $K,$ let $\Sigma_A(K)=\{\ell \ne p \in \mathcal{P}~|~ \mathcal{C}_{A,\ell}(K) \ne 0 \}.$ Then the following result explains when a rational prime $\ell \ne p$ belongs to $\Sigma_{A}(K)$ in terms of the reduced Alexander polynomial of $K$ and the reduced characteristic polynomial of $A$ modulo $\ell.$ 
\vskip 0.1in

We introduce our first main result of this paper. 
\begin{theorem}\label{main thm A}
    Let $R_{A,K} = \mathrm{Res}(\Delta_K(t),P_A(t)) \in \Z$ be the resultant of $\Delta_K(t)$ and $P_A(t).$ Then exactly one of the following two cases occurs.
    $$(i)~R_{A,K}=0~\textrm{and}~\Sigma_A(K)=\{\ell \in \mathcal{P} : \ell \ne p \}.$$
    $$(ii)~R_{A,K} \ne 0~\textrm{and}~\Sigma_A(K)=\{\ell \in \mathcal{P} : \ell \ne p~\textrm{and}~\ell \mid R_{A,K} \}, \textrm{which is finite}.$$
\end{theorem}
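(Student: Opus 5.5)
The plan is to reduce everything to Lemma \ref{reduction rel prime lem0}, which says $\ell\in\Sigma_A(K)$ (for $\ell\neq p$) if and only if $\overline{\Delta}_K(t)$ and $\overline{P}_A(t)$ have a nonunit common factor in $R_\ell=\F_\ell[t,t^{-1}]$. So the task is to compare that condition with divisibility of the integer resultant $R_{A,K}$ by $\ell$. The dichotomy in the statement is automatic: either $R_{A,K}=0$ or $R_{A,K}\neq 0$. It remains to identify $\Sigma_A(K)$ in each case; finiteness in (ii) is immediate because a nonzero integer has finitely many prime divisors.

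\emph{Case $R_{A,K}=0$.} Over the field $\Q$, vanishing of the resultant is equivalent to $\Delta_K(t)$ and $P_A(t)$ having a common nonconstant factor in $\Q[t]$. Theorem \ref{Rl mod isom thm}, direction (ii)$\Rightarrow$(i), in contrapositive form, then gives $\mathcal{C}_{A,\ell}(K)\neq 0$ for every prime $\ell\neq p$. Its proof uses the factor $h$ with $\ell\nmid h(0)$. Hence $\Sigma_A(K)=\{\ell\in\mathcal P:\ell\neq p\}$.

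\emph{Case $R_{A,K}\neq 0$.} Fix $\ell\neq p$.

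\textbf{Step 1: the resultant commutes with reduction.} Since $P_A$ is monic, its leading coefficient does not vanish mod $\ell$. The Sylvester determinant then reduces to give $\overline{R_{A,K}}=\mathrm{Res}(\overline{\Delta}_K,\overline{P}_A)$, up to a factor that is a power of the leading coefficient of $P_A$, hence $\pm1$. This holds even if $\deg\overline{\Delta}_K<\deg\Delta_K$: one expands the Sylvester matrix along the degenerate leading rows, using the monic polynomial $P_A$.

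\textbf{Step 2: the resultant detects common roots.} Over $\F_\ell$, $\mathrm{Res}(\overline{\Delta}_K,\overline{P}_A)=\pm\prod_{P_A(\beta)=0}\overline{\Delta}_K(\beta)$, the product taken over roots $\beta$ of $\overline{P}_A$ in $\overline{\F}_\ell$. So $\ell\mid R_{A,K}$ if and only if $\overline{\Delta}_K$ and $\overline{P}_A$ share a root in $\overline{\F}_\ell$, i.e.\ have a nonconstant common factor in $\F_\ell[t]$.

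\textbf{Step 3: pass from $\F_\ell[t]$ to $R_\ell$.} A common factor in $\F_\ell[t]$ is a nonunit in $R_\ell$ exactly when it is not a monomial $ct^k$. Since $P_A(0)=q^g\not\equiv 0 \pmod \ell$, the roots of $\overline{P}_A$ are nonzero. So a shared root $\beta\neq 0$ yields the factor $t-\beta$, which is a nonunit in $R_\ell$. Conversely, a nonunit common factor in $R_\ell$, after removing powers of $t$, is a nonconstant polynomial with nonzero roots, and these are shared roots.

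Combining Steps 1--3 with Lemma \ref{reduction rel prime lem0} gives $\Sigma_A(K)=\{\ell\neq p:\ell\mid R_{A,K}\}$.

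The main obstacle is Step 1 when $\ell$ divides the leading coefficient of $\Delta_K$, so that the degree drops on reduction. The monicity of $P_A$ is exactly what rescues this. Formally, I would use $\mathrm{Res}(\Delta_K,P_A)=\pm\mathrm{Res}(P_A,\Delta_K)=\pm\prod_{P_A(\alpha)=0}\Delta_K(\alpha)$, a norm from $\Z[t]/(P_A)$. This expression is $\pm\det$ of multiplication by $\Delta_K$ on the free $\Z$-module $\Z[t]/(P_A)$, and it therefore reduces mod $\ell$ to $\pm\det$ of multiplication by $\overline{\Delta}_K$ on $\F_\ell[t]/(\overline{P}_A)$. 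That determinant vanishes if and only if $\overline{\Delta}_K$ is a zero divisor there, i.e.\ shares a factor with $\overline{P}_A$. This formulation avoids the Sylvester-matrix bookkeeping entirely, and I would use it in place of Steps 1--2.
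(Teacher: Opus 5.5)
Your proof is correct and follows the same route as the paper. Like the paper, you reduce to Lemma~\ref{reduction rel prime lem0} and then identify the failure of coprimality of $\overline{\Delta}_K$ and $\overline{P}_A$ in $R_\ell$ with $\ell\mid R_{A,K}$, using that $P_A$ is monic. The paper compresses that last equivalence into one sentence; you justify it, via compatibility of the resultant with reduction mod $\ell$ (cleanest in your determinant-on-$\Z[t]/(P_A)$ form) and the observation $P_A(0)=q^g\not\equiv 0 \pmod{\ell}$, which lets you pass between coprimality in $\F_\ell[t]$ and in $R_\ell=\F_\ell[t,t^{-1}]$.
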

   
\begin{proof}
By Lemma \ref{reduction rel prime lem0}, a rational prime $\ell\ne p$ belongs to $\Sigma_A(K)$
if and only if $\overline{\Delta}_K(t)$ and $\overline{P}_A(t)$ are not relatively prime in $R_{\ell}$. Since $P_A(t)$ is monic, the latter is equivalent to $\ell\mid R_{A,K}$.

    \vskip 0.1in
    Now, there are two cases to consider. If $R_{A,K}=0,$ then any rational prime $\ell \ne p$ divides $R_{A,K}=0,$ so that we have $\Sigma_{A}(K)=\{\ell \in \mathcal{P} : \ell \ne p \}.$ If $R_{A,K} \ne 0$, then we see that $\Sigma_{A}(K)=\{\ell \in \mathcal{P} : \ell \ne p, \ell \mid R_{A,K}\}$ is finite. 
\vskip 0.1in
 This completes the proof.
\end{proof}
In view of Theorem \ref{main thm A}, we introduce the following notion.
\begin{definition}
A knot $K$ is said to be \emph{persistently $A$-colorable} if $\Sigma_{A}(K)$ is infinite.
\end{definition}
Then according to Theorem \ref{main thm A}, a knot $K$ is persistently $A$-colorable if and only if $\Sigma_{A}(K) = \{\ell \in \mathcal{P} : \ell \ne p \}.$ The next result provides a criterion for exactly when a knot $K$ is persistently $A$-colorable.
\begin{theorem}\label{main thm B}
    The following four statements are equivalent.
    \vskip 0.1in
    (i) $K$ is persistently $A$-colorable.
    \vskip 0.1in
    (ii) $\Sigma_{A}(K) = \{\ell \in \mathcal{P}: \ell \ne p \}.$
    \vskip 0.1in
    (iii) $\Delta_K(t)$ and $P_A(t)$ are not relatively prime in $\Q[t].$  
    \vskip 0.1in
    (iv) There exists a simple abelian variety $B$ over $\F_q$, which is an isogeny factor of $A$ over $\F_q$ such that $\sharp B(\F_q)=1$ and $m_B(t) \mid \Delta_K(t)$ in $\Q[t]$, where $m_B(t)$ denotes the minimal polynomial of the Frobenius endomorphism $\pi_B$ of $B.$ 
\end{theorem}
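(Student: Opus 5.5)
The equivalences (i)$\Leftrightarrow$(ii)$\Leftrightarrow$(iii) follow from Theorem \ref{main thm A} together with the fact that, for a monic polynomial, the resultant vanishes exactly when there is a common root. Suppose $K$ is persistently $A$-colorable, so $\Sigma_A(K)$ is infinite. Case (ii) of Theorem \ref{main thm A} gives a finite set, so we must be in case (i). Hence $R_{A,K}=0$ and $\Sigma_A(K)=\{\ell\in\mathcal P:\ell\neq p\}$, which is (ii). Conversely, (ii) gives an infinite set, which is (i). Since $P_A(t)$ is monic, $R_{A,K}=0$ if and only if $\Delta_K(t)$ and $P_A(t)$ share a complex root, that is, if and only if they are not relatively prime in $\Q[t]$. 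This gives (ii)$\Leftrightarrow$(iii). Alternatively, one could invoke Theorem \ref{Rl mod isom thm} directly: (iii) is the negation of its condition (i), hence equivalent to $\mathcal C_{A,\ell}(K)\neq0$ for every $\ell\neq p$.

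The substantive step is (iii)$\Leftrightarrow$(iv), which I would prove using the Poincaré reducibility theorem and Tate's theory. Write $A\sim B_1^{n_1}\times\cdots\times B_s^{n_s}$ with the $B_i$ simple over $\F_q$ and pairwise non-isogenous. Then $P_A=\prod_i P_{B_i}^{n_i}$. For each simple $B$, the endomorphism algebra is a division algebra whose center is $\Q(\pi_B)$, so $m_B(t)$ is irreducible over $\Q$ and $P_B=m_B^{e}$ for some $e\ge1$. It follows that the irreducible factors of $P_A$ over $\Q$ are exactly the $m_{B_i}$.

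For (iv)$\Rightarrow$(iii): $m_B$ is a nonconstant divisor of $P_B$, hence of $P_A$, and by assumption it also divides $\Delta_K$. So $\gcd(\Delta_K,P_A)\neq1$. Note that this direction does not use $\sharp B(\F_q)=1$.

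For (iii)$\Rightarrow$(iv): some irreducible factor $m_{B_i}$ of $P_A$ divides $\Delta_K$. Take $B=B_i$. It remains to show $\sharp B(\F_q)=1$, and I expect this to be the main obstacle, since it is where the knot-theoretic input $\Delta_K(1)=\pm1$ must be combined with integrality. Write $\Delta_K=m_B\cdot c$. Because $m_B$ is monic in $\Z[t]$ and $\Delta_K$ is primitive, Gauss's lemma gives $c\in\Z[t]$. Evaluating at $t=1$ yields $m_B(1)c(1)=\pm1$, so $m_B(1)=\pm1$. Therefore
$$\sharp B(\F_q)=P_B(1)=m_B(1)^e=\pm1.$$
Since $\sharp B(\F_q)$ is a positive integer, $\sharp B(\F_q)=1$.

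The points needing care are the structural facts about simple abelian varieties: that $m_B$ is irreducible and monic in $\Z[t]$, and that $P_B$ is a power of $m_B$. I would cite these from \cite{Tate66} or \cite{EGMpdf}. The Gauss lemma step is routine.
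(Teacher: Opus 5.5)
Your proof is correct and takes essentially the same route as the paper. The substantive step (iii)$\Rightarrow$(iv) is exactly the paper's argument: the isogeny decomposition $P_A=\prod_i P_{B_i}^{n_i}$, Gauss's lemma, and evaluation at $t=1$ to get $m_B(1)=\pm1$ and hence $\sharp B(\F_q)=P_B(1)=1$. The only difference is organization. The paper runs the single cycle (i)$\Rightarrow$(ii)$\Rightarrow$(iii)$\Rightarrow$(iv)$\Rightarrow$(i), using Theorem~\ref{Rl mod isom thm} for (ii)$\Rightarrow$(iii) and Theorem~\ref{main thm A} to close the loop. You instead prove each adjacent pair as an equivalence, using vanishing of the resultant for (ii)$\Leftrightarrow$(iii).
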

\begin{proof}
    (i) $\Rightarrow$ (ii) follows from Theorem \ref{main thm A}. (ii) $\Rightarrow $ (iii) follows from Theorem \ref{Rl mod isom thm}.  
    \vskip 0.1in
    Now, let $A \sim B_1^{m_1} \times B_2^{m_2} \times \cdots \times B_r^{m_r}$ be the isogeny decomposition of $A$ over $\F_q$, where $B_1,B_2,\cdots, B_r$ are simple abelian varieties over $\F_q$, which are pairwise non-isogenous over $\F_q$, and $m_1, m_2,\cdots, m_r$ are positive integers. According to the proof of \cite[Theorem 16.4]{EGMpdf}, we have $\displaystyle P_A = \prod_{j=1}^{r} P_{B_j}^{m_j}$ where $P_{B_{j}}$ denotes the characteristic polynomial of $\pi_{B_j}$ for each $j=1,2,\cdots,r.$ Then by assumption, there exists $i \in \{1,2,\cdots,r\}$ such that the minimal polynomial $m_{B_i}(t)$ of $\pi_{B_i}$ divides $\Delta_K (t)$ in $\Q[t].$ Since $m_{B_i}(t)$ is monic and $\Delta_K (t)$ is primitive, it follows from Gauss's lemma that $m_{B_i}(t) \mid \Delta_K(t)$ in $\Z[t].$ In particular, $m_{B_i}(1)  \mid \Delta_K(1)=\pm 1$, and hence, we have $m_{B_i}(1) = \pm 1.$ Since $P_{B_i}(t)$ is a power of $m_{B_i}(t)$ and $P_{B_i}(1)=\sharp B_i(\F_q) \geq 1$, it follows that $P_{B_i}(1)=\sharp B_i(\F_q)=1.$ This proves (iii) $\Rightarrow$ (iv). 
    \vskip 0.1in
    Finally, by assumption, the irreducible polynomial $m_B(t)$ divides both $P_A(t)$ and $\Delta_K(t)$, and hence, $K$ is persistently $A$-colorable by Theorem \ref{main thm A}. This proves (iv) $\Rightarrow$ (i).
    \vskip 0.1in
    This completes the proof.
\end{proof}

\begin{corollary}\label{main cor A}
    If $|\alpha| \ne \sqrt{q}$ for all zeros $\alpha$ of $\Delta_K(t),$ then $V_{K,A,\ell} \cong A[\ell](\overline{\F}_q)$ for all but finitely many primes $\ell \ne p.$
\end{corollary}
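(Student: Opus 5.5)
The plan is to reduce the corollary to Theorem \ref{Rl mod isom thm}: it suffices to show that $P_A(t)$ and $\Delta_K(t)$ are relatively prime in $\mathbb Q[t]$. Once that is known, the implication (i) $\Rightarrow$ (iii) of Theorem \ref{Rl mod isom thm}, in its reformulation in terms of $V_{K,A,\ell}$, gives $V_{K,A,\ell}\cong A[\ell](\overline{\F}_q)$ as $R_\ell$-modules for all but finitely many primes $\ell\neq p$. In particular this holds as abelian groups, and even as quandles by Corollary \ref{quand isom cor}.

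To prove coprimality, I would argue by contradiction. Suppose $\gcd(P_A(t),\Delta_K(t))\neq 1$ in $\mathbb Q[t]$. Then the two polynomials share a nonconstant common factor $h(t)\in\mathbb Q[t]$. Since $\mathbb C$ is algebraically closed, $h$ has a complex zero $\alpha$, and this $\alpha$ is a common zero of $P_A(t)$ and $\Delta_K(t)$. By the standard fact recalled in \S2.2, every complex zero of $P_A(t)$ satisfies $|\alpha|=\sqrt q$. So $\alpha$ is a zero of $\Delta_K(t)$ with $|\alpha|=\sqrt q$, which contradicts the hypothesis. Hence $P_A$ and $\Delta_K$ are coprime in $\mathbb Q[t]$.

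There is essentially no obstacle here. The only point needing care is that the resultant/gcd criterion in $\mathbb Q[t]$ corresponds to a common complex root. This is immediate, since a nontrivial gcd is a nonconstant polynomial and so has a root in $\mathbb C$. One can phrase this equivalently as $R_{A,K}=\prod_{P_A(\beta)=0}\Delta_K(\beta)\neq 0$ (up to sign), because no root $\beta$ of $P_A$ is a root of $\Delta_K$. Then Theorem \ref{main thm A}(ii) shows that $\Sigma_A(K)$ is finite, which gives the same conclusion via the definition of $\Sigma_A(K)$ and the splitting $V_{K,A,\ell}\cong A[\ell](\overline{\F}_q)\oplus\mathcal C_{A,\ell}(K)$.
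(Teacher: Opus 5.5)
Your proposal is correct and follows essentially the paper's route: you establish coprimality of $\Delta_K(t)$ and $P_A(t)$ in $\mathbb Q[t]$ from the fact that every root of $P_A$ has absolute value $\sqrt q$ (a step the paper only asserts ``by assumption''), and then get finiteness of the exceptional primes from Theorem \ref{Rl mod isom thm} or Theorem \ref{main thm A}. The paper states this last step as $\Sigma_A(K)=\{\ell\neq p : \ell\mid R_{A,K}\}$ being finite, which is exactly your alternative resultant formulation.
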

\begin{proof}
    By assumption, we have $\gcd(\Delta_{K}(t), P_A(t))=1,$ and hence, it follows from Theorems \ref{main thm A} and \ref{main thm B} that $\Sigma_A(K)= \{\ell \ne p \in \mathcal{P} : \ell \mid R_{A,K} \}.$ This implies that we have $\mathcal{C}_{A,\ell}(K) \ne 0$ only for those finite rational primes $\ell \ne p$ dividing $R_{A,K},$ which completes the proof.
\end{proof}
\begin{example}
   Let $K=3_1$ be the trefoil knot. Then it is known that $\Delta_K(t)=t^2 - t+1$. Since the zeros of $\Delta_K(t)=t^2 -t +1$ are primitive sixth roots of unity, it follows from Corollary \ref{main cor A} that $V_{K,A,\ell} \cong A[\ell](\overline{\F}_q)$ for all but finitely many primes $\ell \ne p.$
\end{example}

\begin{corollary}\label{main cor B}
    If $K$ is persistently $A$-colorable, then $q \leq 4.$ In particular, $p=2$ or $p=3.$
\end{corollary}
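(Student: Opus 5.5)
By Theorem \ref{main thm B}, persistent $A$-colorability of $K$ gives a simple isogeny factor $B$ of $A$ over $\F_q$ with $\sharp B(\F_q)=1$. So the claim reduces to a statement about abelian varieties alone: if some abelian variety $B$ over $\F_q$ of dimension $d\geq 1$ satisfies $\sharp B(\F_q)=P_B(1)=1$, then $q\leq 4$. This is the Madan--Pal phenomenon mentioned in the introduction, and I would prove it directly from the Weil bounds recalled in \S2.2.

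Write $P_B(t)=\prod_{i=1}^{2d}(t-\alpha_i)$ with $|\alpha_i|=\sqrt q$. Then
$$1=P_B(1)=\prod_{i=1}^{2d}|1-\alpha_i| \geq \prod_{i=1}^{2d}\bigl(|\alpha_i|-1\bigr)=(\sqrt q-1)^{2d},$$
using $|1-\alpha_i|\geq |\alpha_i|-1=\sqrt q-1>0$. Hence $\sqrt q-1\leq 1$, so $q\leq 4$. Since $q=p^a$ is a prime power at most $4$, we get $q\in\{2,3,4\}$ and therefore $p\in\{2,3\}$.

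A cleaner route, avoiding the reduction to a simple factor, is to note that the zeros of $\gcd(\Delta_K,P_A)$ are zeros of $\Delta_K$ of absolute value $\sqrt q$ (this is the viewpoint of Corollary \ref{main cor A}). However, a bound from $\Delta_K$ alone is not obvious, so I prefer the route through $\sharp B(\F_q)=1$. The only care needed is the inequality step: the factor $|1-\alpha_i|$ equals $\sqrt{q}-1$ only when $\alpha_i=\sqrt q$, and the lower bound holds regardless. Also, $q=4$ is not excluded by the equality case, which is fine because the statement asserts only $q\leq 4$. I do not expect any real obstacle; the main point is recognising that Theorem \ref{main thm B}(iv) supplies the rational-point condition needed to apply the triangle-inequality bound.
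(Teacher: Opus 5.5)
Your proof is correct and follows the paper's argument: Theorem \ref{main thm B}(iv) gives a simple isogeny factor $B$ with $\sharp B(\F_q)=1$, and the Weil bound $(\sqrt{q}-1)^{2\dim B}\leq P_B(1)=1$ then forces $q\leq 4$, hence $p\in\{2,3\}$. You derive that inequality explicitly from $|1-\alpha_i|\geq\sqrt{q}-1$, whereas the paper simply cites it as the Hasse--Weil bound (the paper also re-derives $P_B(1)=1$ from $m_B(1)\mid\Delta_K(1)=\pm1$, which is redundant given (iv)).
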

    
\begin{proof}
By Theorem \ref{main thm B}, there exists a simple isogeny factor $B$ of $A$ such that $\sharp B(\F_q)=1$ and $m_B(t) \mid \Delta_K(t)$. In particular, $m_B(1) \mid \Delta_K(1)= \pm 1$, and hence, we have $m_B(1)=\pm 1.$ Then since $P_B(t)$ is a power of $m_B(t)$ and $P_B(1)=\sharp B(\F_q) \geq 1,$ it follows that $P_B(1)=\sharp B(\F_q)=1.$ By Hasse-Weil bound, we have
$$(\sqrt{q}-1)^{2 \dim B} \leq 1,$$
and hence, we obtain that $q \leq 4,$ as desired. Also, since $q$ is a power of $p,$ $p=2$ or $p=3.$ 
\vskip 0.1in
This completes the proof.
\end{proof}
Corollary \ref{main cor B} indicates that if the order of the finite base field of $A$ is large, then $K$ is not persistently $A$-colorable as in the following corollary.
\begin{corollary}\label{main cor C}
     If $q \geq 5,$ then for any abelian variety $A$ over $\F_q$ and any oriented knot $K,$ we have $V_{K,A,\ell} \cong A[\ell](\overline{\F}_q)$ as $R_{\ell}$-modules for all but finitely many primes $\ell \ne p$. (In other words, for all but finitely many rational primes $\ell \ne p,$ the knot $K$ admits only constant colorings by the Alexander quandle $Q(A[\ell](\overline{\F}_q),\pi_{A,\ell}).$)
\end{corollary}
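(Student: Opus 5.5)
The plan is to argue by contraposition from Corollary \ref{main cor B}, and then convert the resulting finiteness of $\Sigma_A(K)$ into the module statement using Theorem \ref{Rl mod isom thm}.

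Fix $q \geq 5$, an abelian variety $A$ over $\F_q$ and an oriented knot $K$. By Corollary \ref{main cor B}, if $K$ were persistently $A$-colorable we would have $q \leq 4$; hence $K$ is not persistently $A$-colorable. Theorem \ref{main thm B} then says that $\Delta_K(t)$ and $P_A(t)$ are relatively prime in $\Q[t]$, so condition (i) of Theorem \ref{Rl mod isom thm} holds. By (iii) of that theorem, together with its reformulation in terms of $V_{K,A,\ell}$, we get $V_{K,A,\ell} \cong A[\ell](\overline{\F}_q)$ as $R_\ell$-modules for all but finitely many primes $\ell \neq p$. Alternatively, Theorem \ref{main thm A}(ii) gives this directly: $R_{A,K} \neq 0$, and the exceptional primes are exactly the finitely many $\ell \neq p$ dividing $R_{A,K}$.

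For the parenthetical reformulation, I would use the splitting $V_{K,A,\ell} \cong A[\ell](\overline{\F}_q) \oplus \mathcal{C}_{A,\ell}(K)$ established earlier, in which the first summand consists of the constant colorings. By Theorem \ref{Rl mod isom thm}, $\mathcal{C}_{A,\ell}(K)=0$ for the same cofinite set of primes. For such $\ell$, the isomorphism $\psi_a$ shows that every coloring $f$ satisfies $\phi_f = 0$, i.e.\ $f(a_j) = f(a)$ for all arcs $a_j$. So every coloring by $Q(A[\ell](\overline{\F}_q),\pi_{A,\ell})$ is constant.

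There is no real obstacle. The only point needing care is that the finiteness statement in Theorem \ref{Rl mod isom thm}(iii) is the same "all but finitely many" condition that the corollary asserts, and that "only constant colorings" is read as the vanishing of $\mathcal{C}_{A,\ell}(K) \cong \ker(\mathrm{ev}_a)$.
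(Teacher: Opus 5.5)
Your proposal is correct and is essentially the paper's proof: Corollary \ref{main cor B} rules out persistent $A$-colorability when $q \geq 5$, Theorem \ref{main thm B} then gives $\gcd(\Delta_K(t),P_A(t))=1$ in $\Q[t]$, and Theorem \ref{Rl mod isom thm} yields $V_{K,A,\ell}\cong A[\ell](\overline{\F}_q)$ for all but finitely many $\ell\neq p$. Your justification of the parenthetical, via $\ker(\mathrm{ev}_a)\cong\mathcal{C}_{A,\ell}(K)=0$, is a correct small addition that the paper leaves implicit.
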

\begin{proof}
    By Corollary \ref{main cor B} and Theorem \ref{main thm B}, we have $\gcd(\Delta_K(t), P_A(t)) =1,$ and hence, the desired result follows from Theorem \ref{Rl mod isom thm}.
\end{proof}

The bound $q \geq 5$ in Corollary \ref{main cor C} is sharp, as shown in the next proposition.
\begin{proposition}\label{main prop A}
    For each $q \in \{2,3,4\},$ there exists an abelian variety $A$ over $\F_q$ and an oriented knot $K$, such that $K$ is persistently $A$-colorable. 
\end{proposition}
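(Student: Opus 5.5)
The plan is to take $A=E$ to be an elliptic curve over $\F_q$ with exactly one rational point, so that $\sharp E(\F_q)=1$. Then I will exhibit a knot $K$ whose normalized Alexander polynomial is divisible by the minimal polynomial $m_E(t)$, and conclude by Theorem \ref{main thm B}, (iv) $\Rightarrow$ (i). Writing $P_E(t)=t^2-at+q$, the condition $P_E(1)=1$ forces $a=q$. So we need $P_E(t)=t^2-qt+q$ with $|q|\le 2\sqrt q$, which holds exactly for $q\le 4$; this is consistent with Corollary \ref{main cor B}.

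\emph{Existence of $E$.} For $q=2$, Example \ref{1rationalpt ex} already gives $E\colon y^2+y=x^3+x+1$ with $P_E(t)=t^2-2t+2$. For $q=3$, we need trace $3\le 2\sqrt3$. Since $q$ is prime, Deuring's theorem (equivalently Honda--Tate/Waterhouse) provides an elliptic curve over $\F_3$ with $P_E(t)=t^2-3t+3$, and I would also try to name one explicitly. For $q=4$, the trace $a=4=2\sqrt4$ is allowed by Waterhouse's classification, since $q$ is an even power of $p$. It is realized by a supersingular curve over $\F_4$ whose Frobenius is $[2]$, for instance a suitable twist of $y^2+y=x^3$. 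In that case $P_E(t)=(t-2)^2$ and $m_E(t)=t-2$. For $q=2,3$, the polynomial $t^2-qt+q$ has discriminant $q^2-4q<0$, so it is irreducible over $\Q$ and $m_E=P_E$. In all three cases a single elliptic curve is trivially simple, so $B=E$ is an admissible simple isogeny factor.

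\emph{Choice of knot.} For $q=4$, take $K=6_1$ (the stevedore knot), with $\Delta_K(t)=2t^2-5t+2=(t-2)(2t-1)$. Then $m_E(t)=t-2$ divides $\Delta_K(t)$. For $q=2,3$, let $m(t)=t^2-qt+q$ and $m^*(t)=t^2m(1/t)=qt^2-qt+1$, and set
$$\Delta(t)=m(t)\,m^*(t).$$
This polynomial is symmetric up to a unit, has integer coefficients and content $1$, and satisfies $\Delta(1)=m(1)m^*(1)=1\cdot 1=1$. By the classical realization theorem of Seifert (also Levine), every such polynomial is the Alexander polynomial of some knot $K$. Its normalized form is exactly $\Delta(t)$, and $m_E(t)\mid\Delta_K(t)$. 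Theorem \ref{main thm B} then shows that $K$ is persistently $E$-colorable.

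The main obstacle is the existence step for $q=3$ and $q=4$. There we rely on the Deuring/Waterhouse classification of possible Frobenius traces rather than on an explicit equation. For $q=4$, the edge case $a=2\sqrt q$ requires checking the supersingular condition specifically. If I wanted a fully explicit argument, I would search small Weierstrass equations over $\F_3$ and $\F_4$ by point counting until one with exactly one rational point is found.
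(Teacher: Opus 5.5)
Your proposal is correct and follows essentially the same route as the paper. The paper likewise takes an elliptic curve with one rational point and $m_A(t)=t^2-2t+2$, $t^2-3t+3$, $t-2$ for $q=2,3,4$, realizes exactly your polynomials $m(t)m^*(t)$ (namely $2t^4-6t^3+9t^2-6t+2$, $3t^4-12t^3+19t^2-12t+3$, $2t^2-5t+2$) as Alexander polynomials, and applies Theorem \ref{main thm B}. Your Deuring/Waterhouse discussion and the supersingular twist over $\F_4$ only supply the existence of the curves, which the paper takes from \cite{EGMpdf}.
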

\begin{proof}
    For $q=2$, in view of \cite[Theorem 16.64]{EGMpdf}, let $A$ be an elliptic curve over $\F_q$ with $\sharp A(\F_q)=1$ and $m_A(t)=t^2 -2t+2$. Also, let $f(t)=2t^4 -6t^3 +9t^2 -6t+2.$ Since $f(t)$ is symmetric up to multiplication by a power of $t$, and satisfies $f(1)=1,$ by the classical realization theorem for Alexander polynomials \cite[Theorem 5 in p. 171]{Rol03}, there exists an oriented knot $K$ with $\Delta_K(t)=f(t) = 2t^4 -6t^3 +9t^2 -6t+2.$ 
    Then since $m_A(t)$ divides $\Delta_K(t),$ it follows from Theorem \ref{main thm B} that $K$ is persistently $A$-colorable. 
    \vskip 0.1in
    Similarly, for $q=3$ (resp.\ $q=4$), we take $A$ to be an elliptic curve over $\F_q$ with $\sharp A(\F_q)=1$ and $m_A(t) = t^2 -3t+3$ (resp.\ $m_A(t)=t-2$), and $f(t)=3t^4 -12t^3 +19t^2 -12t +3 $ (resp.\ $f(t)=2t^2 - 5t+2$). Then we can proceed as in the case of $q=2.$
    \vskip 0.1in
    This completes the proof.
\end{proof}

In fact, the statements in Proposition \ref{main prop A} can be improved as in the following remark.
\begin{remark}\label{main rmk 2}
    For each $q \in \{2,3,4\},$ there exists an oriented knot $K$ such that, for every $g \geq 1,$ there exists an abelian variety $A$ of dimension $g$ over $\F_q$ for which $K$ is persistently $A$-colorable. Moreover, we may choose $K$ independently of $q.$ 
\end{remark}

Now, we apply our previous results to various families of knots. We begin our discussion with knots whose normalized Alexander polynomials are monic.
\begin{theorem}\label{main thm C}
    Let $K$ be an oriented knot whose normalized Alexander polynomial $\Delta_{K}(t) \in \Z[t]$ is monic. Then $K$ is not persistently $A$-colorable for any abelian variety $A$ over any finite field $\F_q.$
\end{theorem}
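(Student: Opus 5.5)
We argue by contradiction, using the criterion of Theorem \ref{main thm B}. Suppose $K$ is persistently $A$-colorable for some abelian variety $A$ over $\F_q$. By Theorem \ref{main thm B} (iii)$\Rightarrow$(iv), there is a simple isogeny factor $B$ of $A$ over $\F_q$ with $\sharp B(\F_q)=1$ and $m_B(t)\mid \Delta_K(t)$ in $\Q[t]$. As in the proof of Theorem \ref{main thm B}, $m_B(t)$ is monic in $\Z[t]$ and $\Delta_K(t)$ is primitive, so Gauss's lemma upgrades this to a factorization $\Delta_K(t)=m_B(t)\,h(t)$ with $h(t)\in\Z[t]$.

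Next we compare constant terms. Since $\Delta_K(t)$ is monic and symmetric ($t^{d}\Delta_K(1/t)=\pm\Delta_K(t)$ with $d=\deg\Delta_K$, and the normalization has nonzero constant term), its constant term equals $\pm$ its leading coefficient, namely $\pm1$. Hence $m_B(0)h(0)=\pm1$, so $m_B(0)=\pm1$.

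On the other hand, $P_B(t)=m_B(t)^e$ for some $e\ge1$, and $P_B(0)=q^{\dim B}$. Thus $m_B(0)^e=q^{\dim B}$. Since $\dim B\ge1$ and $q\ge2$, the right side is at least $2$, while the left side is $\pm1$. This contradiction shows that no such $B$ exists, so $K$ is not persistently $A$-colorable.

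An alternative route gives the same conclusion via absolute values. All complex roots of $m_B$ have absolute value $\sqrt q>1$, so $|m_B(0)|=q^{\deg m_B/2}>1$. This cannot divide the constant term $\pm1$ of $\Delta_K$.

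The only delicate step is confirming that the normalized, monic $\Delta_K$ has constant term $\pm1$. This follows from the symmetry of the Alexander polynomial together with the chosen normalization, which removes any power-of-$t$ ambiguity. Beyond that, the argument is a direct application of Theorem \ref{main thm B} and the standard facts on $P_B$ listed in \S2.2. Note also that the hypothesis $\sharp B(\F_q)=1$ is never used; only the divisibility $m_B(t)\mid\Delta_K(t)$ matters.
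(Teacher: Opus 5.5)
Your proof is correct and follows the paper's argument. You invoke Theorem~\ref{main thm B} to get a simple factor $B$ with $m_B(t)\mid\Delta_K(t)$, pass to $\Z[t]$ by Gauss's lemma, and contradict $\Delta_K(0)=\pm1$ using the constant term of $m_B$. The paper uses exactly your ``alternative route'' $|m_B(0)|=q^{\deg m_B/2}>1$; your main route via $m_B(0)^e=P_B(0)=q^{\dim B}$ is an equivalent variant. One cosmetic slip: persistent colorability is condition (i) of Theorem~\ref{main thm B}, so you are actually using (i)$\Rightarrow$(iii)$\Rightarrow$(iv), not (iii)$\Rightarrow$(iv) alone.
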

\begin{proof}
    Suppose on the contrary that there exists an abelian variety $A$ over a finite field $\F_q$ such that $K$ is persistently $A$-colorable. Then by Theorem \ref{main thm B}, there exists an $\F_q$-simple isogeny factor $B$ of $A$ such that $\sharp B(\F_q)=1$ and $m_B(t) \mid \Delta_K(t)$ in $\Q[t].$ Then since $m_B(t)$ is monic and $\Delta_K(t)$ is primitive, it follows from Gauss's lemma that $m_B(t) \mid \Delta_K(t)$ in $\Z[t].$ Now, since $\Delta_K(t)$ is monic and symmetric, we have $\Delta_K(0)=1,$ while $|m_B(0)|=q^{\frac{\deg m_B}{2}}>1$, which is absurd. 
    \vskip 0.1in
    This completes the proof.
\end{proof}
\begin{corollary}\label{main cor D}
    If $K$ is an oriented fibered knot, then $K$ is not persistently $A$-colorable for any abelian variety $A$ over any finite field $\F_q.$
\end{corollary}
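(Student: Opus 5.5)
The plan is to reduce Corollary \ref{main cor D} directly to Theorem \ref{main thm C}. The only input needed is the classical fact that a fibered knot has monic Alexander polynomial.

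First, let $K$ be an oriented fibered knot, with fiber a Seifert surface $F$ of genus $h=g(K)$. Then the complement $S^3\setminus K$ fibers over the circle with fiber $F$ and monodromy $\phi\colon F\to F$. The infinite cyclic cover is $F\times\mathbb R$, so the Alexander module $\mathcal A_K$ is isomorphic to $H_1(F;\Z)\cong\Z^{2h}$, with $t$ acting as $\phi_*$. Hence, up to units $\pm t^n$ of $R$, the Alexander polynomial is $\det(tI-\phi_*)$, where $\phi_*\in\mathrm{GL}_{2h}(\Z)$. This is the standard result in \cite{Rol03}, which I will cite rather than reprove.

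Next, $\det(tI-\phi_*)$ is monic of degree $2h$, and its constant term is $\det(\phi_*)=\pm1$ because $\phi_*$ is invertible over $\Z$. I then check that this forces the normalized representative $\Delta_K(t)$ to be monic. Multiplying by $\pm t^n$ and choosing the sign to make the leading coefficient positive leaves the leading coefficient equal to $1$. So $\Delta_K(t)\in\Z[t]$ is monic, and by symmetry its constant term is $\pm1$ as well, which matches what the proof of Theorem \ref{main thm C} uses.

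Finally, applying Theorem \ref{main thm C} gives that $K$ is not persistently $A$-colorable for any abelian variety $A$ over any finite field $\F_q$. The only step needing care is the identification of the Alexander polynomial with the characteristic polynomial of the monodromy, and that is a standard citation, so I expect no real obstacle.
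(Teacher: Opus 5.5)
Your proposal is correct and matches the paper's proof: the paper also cites the classical fact from \cite{Rol03} that a fibered knot has monic normalized Alexander polynomial and then applies Theorem \ref{main thm C}. Your sketch of that fact, identifying $\Delta_K(t)$ with the characteristic polynomial $\det(tI-\phi_*)$ of the monodromy $\phi_*\in\mathrm{GL}_{2h}(\Z)$, is a valid justification of what the paper only cites.
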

\begin{proof}
    If $K$ is an oriented fibered knot, then it is known that the normalized Alexander polynomial $\Delta_K(t)$ is monic \cite{Rol03}, and hence, the desired result follows from Theorem \ref{main thm C}.
\end{proof}
\begin{example}
    Every oriented torus knot $T_{r,s}$ (where $r, s>1$ are coprime integers) is not persistently $A$-colorable for any abelian variety $A$ over any finite field $\F_q.$
\end{example}

\begin{theorem}\label{main thm D}
 Let $A$ be an abelian variety over a finite field $\F_q$ and let $K$ be an oriented knot of genus $g(K)$. Then $K$ is persistently $A$-colorable if and only if $q \in \{2,3,4 \}$, $A$ has an $\F_q$-simple isogeny factor $B$ with $\sharp B(\F_q)=1,$ $\deg m_B \leq g(K),$ and $m_B(t) \cdot m_B^{\vee}(t) \mid \Delta_K(t)$ in $\Q[t]$, where $m_B^{\vee}(t)=\frac{t^{\deg m_B} \cdot  m_B(t^{-1})}{m_B(0)} \in \Q[t].$    
\end{theorem}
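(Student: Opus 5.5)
The plan is to reduce everything to the criterion (iv) of Theorem \ref{main thm B}, and to add two ingredients: the symmetry of $\Delta_K(t)$ and the fact that all roots of $m_B(t)$ have absolute value $\sqrt{q}$.

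For the \emph{if} direction, assume $q\in\{2,3,4\}$ and that $A$ has an $\F_q$-simple isogeny factor $B$ with $\sharp B(\F_q)=1$ and $m_B(t)\cdot m_B^{\vee}(t)\mid \Delta_K(t)$ in $\Q[t]$. Then in particular $m_B(t)\mid\Delta_K(t)$ in $\Q[t]$, so condition (iv) of Theorem \ref{main thm B} holds. Hence $K$ is persistently $A$-colorable. The conditions $q\le 4$ and $\deg m_B\le g(K)$ are not needed for this direction.

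For the \emph{only if} direction, assume $K$ is persistently $A$-colorable.
\begin{itemize}
\item Corollary \ref{main cor B} gives $q\in\{2,3,4\}$.
\item Theorem \ref{main thm B}(iv) gives a simple isogeny factor $B$ with $\sharp B(\F_q)=1$ and $m_B(t)\mid\Delta_K(t)$. By Gauss's lemma this divisibility holds in $\Z[t]$, as in the proof of Theorem \ref{main thm B}.
\item Set $d=\deg\Delta_K$. Since $\Delta_K$ is symmetric with nonzero constant term, $t^{d}\Delta_K(t^{-1})=\Delta_K(t)$. Write $\Delta_K=m_B h$ and $n=\deg m_B$. Substituting $t^{-1}$ and multiplying by $t^d$ gives
$$\Delta_K(t)=\bigl(t^{n}m_B(t^{-1})\bigr)\bigl(t^{d-n}h(t^{-1})\bigr),$$
so $t^n m_B(t^{-1})$ divides $\Delta_K(t)$. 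Note $m_B(0)\neq 0$, since $m_B(0)$ divides a power of $q$ up to sign; after dividing by $m_B(0)$ we get $m_B^{\vee}(t)\mid\Delta_K(t)$ in $\Q[t]$.
\item The polynomial $m_B^{\vee}$ is monic and irreducible over $\Q$, since reversing coefficients preserves irreducibility when the constant term is nonzero. Its roots are the $\alpha^{-1}$ for roots $\alpha$ of $m_B$.
\item Every root $\alpha$ of $m_B$ satisfies $|\alpha|=\sqrt q$, because $m_B$ divides a power of $P_B$. Hence every root of $m_B^{\vee}$ has absolute value $1/\sqrt q\neq\sqrt q$, using $q\ge 2$. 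So $m_B$ and $m_B^{\vee}$ are distinct monic irreducibles, hence coprime in $\Q[t]$, and their product divides $\Delta_K$.
\item Comparing degrees, $2\deg m_B\le\deg\Delta_K\le 2g(K)$, which gives $\deg m_B\le g(K)$.
\end{itemize}

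The step needing the most care is the coprimality of $m_B$ and $m_B^{\vee}$. Without it one only gets $m_B\mid\Delta_K$, and the degree bound would weaken to $\deg m_B\le 2g(K)$. The absolute-value argument settles this cleanly: it rules out $m_B=m_B^{\vee}$ (self-reciprocal factors), which is exactly what could otherwise spoil the doubling of the degree. The remaining care is bookkeeping: the normalization of $m_B^{\vee}$ by $m_B(0)$, and transferring the symmetry of $\Delta_K$ from Laurent form to the normalized polynomial in $\Z[t]$.
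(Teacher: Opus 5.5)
Your proof is correct and follows the paper's argument: you reduce to Theorem \ref{main thm B}(iv), use the symmetry of $\Delta_K$ to get $m_B^{\vee}\mid\Delta_K$, show $m_B\neq m_B^{\vee}$ so their product divides $\Delta_K$, and compare degrees with $\deg\Delta_K\le 2g(K)$. The only cosmetic difference is how you show $m_B\neq m_B^{\vee}$: you compare root absolute values, $\sqrt q$ against $1/\sqrt q$, while the paper compares constant terms, $m_B(0)^2=1$ against $|m_B(0)|=q^{\deg m_B/2}>1$; both rest on the same fact.
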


\begin{proof}
Suppose first that $K$ is persistently $A$-colorable. By Corollary \ref{main cor C},
$q\in\{2,3,4\}$, and by Theorem \ref{main thm B}, $A$ has an $\mathbb F_q$-simple
isogeny factor $B$ such that $\sharp B(\mathbb F_q)=1$ and
$m_B(t)\mid \Delta_K(t)$ in $\mathbb Q[t]$. Since $\Delta_K(t)$ is
symmetric, we also have $m_B^\vee(t)\mid\Delta_K(t)$. Moreover,
$m_B(t)\neq m_B^\vee(t)$, since otherwise
$m_B(0)^2=1$, contradicting $|m_B(0)|=q^{\frac{\deg m_B}{2}}>1$. Since $m_B(t)$ is irreducible,
we have $\gcd(m_B(t),m_B^\vee(t))=1$, and hence, $m_B(t) m_B^{\vee}(t) \mid \Delta_{K}(t)$ in $\Q[t].$ It follows that 
$$ \deg (m_B \cdot  m_B^{\vee}) = 2 \deg m_B  \leq \deg \Delta_K   \leq 2 g(K),$$
and hence, $\deg m_B \leq g(K).$
\vskip 0.1in
Conversely, the stated conditions imply $m_B(t)\mid\Delta_K(t)$ in $\Q[t]$, and
hence, $K$ is persistently $A$-colorable by Theorem \ref{main thm B}.
\vskip 0.1in
This completes the proof.
\end{proof}    

As a consequence of Theorem \ref{main thm D}, we obtain a complete characterization for oriented genus-one knots as in the following corollary.
\begin{corollary}\label{main cor E}
 Let $A$ be an abelian variety over a finite field $\F_q$ and let $K$ be an oriented genus-one knot. Then $K$ is persistently $A$-colorable if and only if $q=4$, $A$ has an $\F_q$-simple isogeny factor $B$ with $\sharp B(\F_q)=1,$ $m_B(t)=t-2,$ and $\Delta_K(t)=2t^2-5t+2.$   
\end{corollary}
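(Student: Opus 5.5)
The plan is to specialize Theorem \ref{main thm D} to $g(K)=1$. The converse direction is then quick. If $q=4$ and $B$ is an $\F_4$-simple isogeny factor with $\sharp B(\F_4)=1$ and $m_B(t)=t-2$, and $\Delta_K(t)=2t^2-5t+2=(t-2)(2t-1)$, then $m_B\mid\Delta_K$. So $K$ is persistently $A$-colorable by Theorem \ref{main thm B}, (iv)$\Rightarrow$(i).

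For the forward direction, Theorem \ref{main thm D} provides:
\begin{itemize}
\item $q\in\{2,3,4\}$;
\item an $\F_q$-simple isogeny factor $B$ of $A$ with $\sharp B(\F_q)=1$;
\item $\deg m_B\le g(K)=1$;
\item $m_B(t)\,m_B^\vee(t)\mid\Delta_K(t)$ in $\Q[t]$.
\end{itemize}
Hence $m_B(t)=t-\beta$ is linear and monic in $\Z[t]$. Since all complex zeros of $P_B$ have absolute value $\sqrt q$, and $m_B$ has integer coefficients, $\beta$ is an integer with $|\beta|=\sqrt q$. This forces $q$ to be a perfect square, so $q=4$ and $\beta=\pm2$. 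Next, $\sharp B(\F_q)=P_B(1)$ is a positive power of $m_B(1)=1-\beta$, and it equals $1$. The choice $\beta=-2$ gives $m_B(1)=3$, which is impossible, so $\beta=2$ and $m_B(t)=t-2$. (This matches the proof of Theorem \ref{main thm B}, where we already saw $m_B(1)=\pm1$.)

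It remains to pin down $\Delta_K$. Here $m_B^\vee(t)=\frac{t(t^{-1}-2)}{-2}=t-\tfrac12$, so $(t-2)(2t-1)=2t^2-5t+2$ divides $\Delta_K(t)$ in $\Q[t]$. Since $\deg\Delta_K\le 2g(K)=2$, the polynomial $\Delta_K$ is a rational scalar multiple $c(2t^2-5t+2)$. The normalized Alexander polynomial is primitive with positive leading coefficient, and $2t^2-5t+2$ is itself primitive in $\Z[t]$, so Gauss's lemma gives $c=1$; alternatively, $\Delta_K(1)=\pm1$ together with $2-5+2=-1$ gives $c=\pm1$, and positivity of the leading coefficient then gives $c=1$.

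The main point needing care is the deduction that a degree-one Frobenius minimal polynomial forces $q=4$ with root $2$. This relies on integrality of $m_B$ and on $|\beta|=\sqrt q$, which is why I invoke the Weil bound explicitly rather than merely the inequality $q\le4$ from Corollary \ref{main cor B}. The rest is bookkeeping with the degree bound from Theorem \ref{main thm D}.
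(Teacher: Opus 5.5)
Your proposal is correct and follows the paper's proof essentially step for step. You specialize Theorem \ref{main thm D} to get a linear integral $m_B(t)=t-\beta$ with $|\beta|=\sqrt q$, which forces $q=4$, and $P_B(1)=m_B(1)^e=1$ then gives $\beta=2$; from there $m_B m_B^\vee\mid\Delta_K$, the bound $\deg\Delta_K\le 2$, and primitivity pin down $\Delta_K(t)=2t^2-5t+2$. The only cosmetic difference is that you obtain the converse directly from Theorem \ref{main thm B} (iv)$\Rightarrow$(i) rather than citing Theorem \ref{main thm D}, which amounts to the same thing.
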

\begin{proof}
  Suppose first that $K$ is persistently $A$-colorable. Then by Theorem \ref{main thm D}, $q \leq 4,$ $A$ has an $\F_q$-simple isogeny factor $B$ with $\sharp B(\F_q) =1,$ $\deg m_B = 1$, and $m_B(t) \cdot m_B^{\vee}(t) \mid \Delta_K(t)$ in $\Q[t].$ Since $m_B(t)$ is monic, write $m_B(t)=t-a$ for some $a \in \Z.$ Since $a$ is a $q$-Weil number, we have $|a| = \sqrt{q}$ so that $q=4,$ and $a = \pm 2.$ Then since
  $$ 1= \sharp B(\F_q)=P_B(1) = m_B(1)^e$$
  for some integer $e \geq 1$, we have $m_B(1)=1-a = \pm 1.$ By combining this observation with $a=\pm 2,$ we see that $a=2,$ whence $m_B(t)=t-2.$ By direct computation, we also get $m_B^{\vee}(t)=t-\frac{1}{2}$ so that $m_B(t) m_B^{\vee}(t) =t^2 - \frac{5}{2}t +1 $ divides $\Delta_K(t).$ Then since $\deg \Delta_K \leq 2g(K)=2,$ it follows that $\deg \Delta_K = 2,$ and then since $2t^2 -5t+2 \in \Z[t]$ is primitive, we get $\Delta_K(t) = 2t^2-5t+2$, as desired.  
  \vskip 0.1in
  The converse follows directly from Theorem \ref{main thm D}, and this completes the proof.
\end{proof}
\begin{corollary}\label{main cor F}
Let $A$ be an abelian variety over a finite field $\F_q$ and let $(J_n)_{n \geq 2}$ be the family of oriented twist knots with $2n$ half-twists, so that $\Delta_{J_n}(t) = n t^2 -(2n+1)t +n \in \Z[t]$. Then $J_n$ is persistently $A$-colorable if and only if $n=2, q=4$, and $A$ has an $\F_q$-simple isogeny factor $B$ with $\sharp B(\F_q)=1.$ In particular, among twist knots $J_n$ with $n\geq 2$, the stevedore knot $J_2 = 6_1$ is the unique knot that is persistently $A$-colorable for some abelian variety $A$ over a finite field $\F_q$.
\end{corollary}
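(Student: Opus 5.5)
The plan is to reduce the statement to Corollary \ref{main cor E}, since every twist knot $J_n$ with $n \geq 2$ has genus one. It bounds a genus-one Seifert surface, and $\Delta_{J_n}$ is nonconstant of degree $2$, so $g(J_n)=1$. Before invoking the corollary, I check that the displayed polynomial is already the normalized one. Its leading coefficient $n$ is positive and its constant term $n$ is nonzero, and it is primitive because $\gcd(n,2n+1)=1$.

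For the forward direction, assume $J_n$ is persistently $A$-colorable. Corollary \ref{main cor E} gives $q=4$, together with an $\F_q$-simple isogeny factor $B$ of $A$ satisfying $\sharp B(\F_q)=1$, $m_B(t)=t-2$, and $\Delta_{J_n}(t)=2t^2-5t+2$. Comparing leading coefficients with $nt^2-(2n+1)t+n$ gives $n=2$; the middle coefficient $-(2\cdot 2+1)=-5$ is then consistent. This direction is immediate.

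For the converse, assume $n=2$, $q=4$, and that $A$ has an $\F_4$-simple isogeny factor $B$ with $\sharp B(\F_4)=1$. This is the step that needs real input. We must show $m_B(t)=t-2$; once that holds, $\Delta_{J_2}(t)=2t^2-5t+2=(2t-1)(t-2)$ is divisible by $m_B$, and Theorem \ref{main thm B}(iv) gives persistent colorability. Equivalently, via Corollary \ref{main cor E}, the data match exactly.

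To get $m_B(t)=t-2$, write $P_B=m_B^e$. Then $m_B(1)^e=1$ forces $m_B(1)=\pm1$, and every root $\alpha$ of $m_B$ satisfies $|\alpha|=2$. Put $d=\deg m_B$. Taking absolute values in $\prod(1-\alpha)=\pm1$ together with $|1-\alpha|\geq |\alpha|-1=1$ forces $|1-\alpha|=1$ for every root. Since $|\alpha|=2$, this can only happen when $\alpha=2$. Hence $m_B(t)=t-2$.

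There is one caveat. The bound $|1-\alpha|\ge 1$ holds for each root individually, and equality in the product forces equality in each factor. The equality case $|1-\alpha|=|\alpha|-1$ means $\alpha$ is a positive real number, which with $|\alpha|=2$ gives $\alpha=2$. So the argument is airtight, and I expect no real obstacle beyond this Weil-number step.

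The final sentence of the statement then follows. $J_2=6_1$ is persistently $A$-colorable for an elliptic curve over $\F_4$ with $P_A(t)=(t-2)^2$, as used in Proposition \ref{main prop A}. No $J_n$ with $n\ge3$ is persistently $A$-colorable for any $A$.
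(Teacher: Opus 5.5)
Your proposal is correct and follows the paper's route, which simply invokes Corollary \ref{main cor E} using $g(J_n)=1$. Your Weil-number argument (each root $\alpha$ of $m_B$ has $|\alpha|=2$, $|m_B(1)|=1$, and $|1-\alpha|\ge|\alpha|-1=1$ forces $\alpha=2$) is a worthwhile addition: the converse genuinely needs that every $\F_4$-simple $B$ with $\sharp B(\F_4)=1$ has $m_B(t)=t-2$, because Corollary \ref{main cor E} assumes this while the present statement does not, and the paper's one-line proof leaves that step implicit.
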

\begin{proof}
Since $g(J_n)=1$ for all $n \geq 1,$ this follows from Corollary \ref{main cor E}.    
\end{proof}

For the case of oriented genus-two knots, we have the following result.
\begin{corollary}\label{main cor G}
Let $A$ be an abelian variety over a finite field $\F_q$ and let $K$ be an oriented genus-two knot. Then $K$ is persistently $A$-colorable if and only if one of the following holds:
\vskip 0.1in
(i) $q=2$, $A$ has a simple isogeny factor $B$ with $\sharp B(\F_2)=1$, $m_B(t)=t^2-2t+2,$ and $\Delta_K(t)=2t^4 - 6t^3 +9t^2 -6t + 2.$
\vskip 0.1in
(ii) $q=2$, $A$ has a simple isogeny factor $B$ with $\sharp B(\F_2)=1$, $m_B(t)=t^2 -2,$ and $\Delta_K(t)=2t^4 -5t^2+ 2.$
\vskip 0.1in
(iii) $q=3$, $A$ has a simple isogeny factor $B$ with $\sharp B(\F_3)=1$, $m_B(t)=t^2-3t+3,$ and $\Delta_K(t)=3t^4 - 12t^3 +19t^2 -12t + 3.$
\vskip 0.1in
(iv) $q=4$, $A$ has a simple isogeny factor $B$ with $\sharp B(\F_4)=1$, $m_B(t)=t-2,$ and $2t^2 -5t +2 \mid \Delta_K(t).$
\end{corollary}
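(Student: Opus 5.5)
The plan is to specialize Theorem \ref{main thm D} to $g(K)=2$ and then enumerate every possibility for $m_B(t)$. By Theorem \ref{main thm D}, $K$ is persistently $A$-colorable if and only if the following all hold:
\begin{itemize}
\item $q\in\{2,3,4\}$;
\item $A$ has an $\F_q$-simple isogeny factor $B$ with $\sharp B(\F_q)=1$;
\item $\deg m_B\le 2$;
\item $m_B(t)\,m_B^\vee(t)\mid \Delta_K(t)$ in $\Q[t]$.
\end{itemize}
The converse direction of the corollary is then immediate from Theorem \ref{main thm D}, because each of (i)--(iv) gives these conditions (in (i)--(iii), $\Delta_K$ is a constant multiple of $m_Bm_B^\vee$). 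So the work is the forward direction: list all admissible $m_B$ and then pin down $\Delta_K$.

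\textbf{Constraints on $m_B$.} Since $B$ is simple, $m_B$ is irreducible over $\Q$, monic in $\Z[t]$, and all its roots have absolute value $\sqrt q$. Also $P_B=m_B^e$ for some $e\ge 1$, so $1=\sharp B(\F_q)=m_B(1)^e$ forces $m_B(1)=\pm1$.
\begin{itemize}
\item \emph{Degree one.} As in the proof of Corollary \ref{main cor E}, the only possibility is $q=4$ and $m_B=t-2$.
\item \emph{Degree two, complex roots.} Here $m_B=t^2-bt+q$ with $b^2<4q$, and the condition becomes $1-b+q=\pm1$.
 \begin{itemize}
 \item $q=2$: $b\in\{2,4\}$, and only $b=2$ satisfies $b^2<8$.
 \item $q=3$: $b\in\{3,5\}$, and only $b=3$ satisfies $b^2<12$.
 \item $q=4$: $b\in\{4,6\}$, and neither satisfies $b^2<16$.
 \end{itemize}
\item \emph{Degree two, real roots.} The roots are $\pm\sqrt q$, so $m_B=t^2-q$. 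This is irreducible only for $q\in\{2,3\}$, and $m_B(1)=1-q=\pm1$ forces $q=2$, giving $t^2-2$. The value $m_B(1)=-1$ is compatible with $P_B(1)=1$ when $e$ is even. This sign issue is harmless because the existence of such a $B$ is part of the hypothesis, not something we must prove.
\end{itemize}
The step most prone to error is making sure this enumeration is exhaustive: the real-root case, and the possibility $b^2=4q$, which gives a reducible polynomial and is therefore excluded.

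\textbf{Determining $\Delta_K$.} Compute $m_B^\vee$ in each case:
\begin{itemize}
\item $t^2-2t+2$: $m_B^\vee=t^2-t+\tfrac12$, and $2m_Bm_B^\vee=2t^4-6t^3+9t^2-6t+2$.
\item $t^2-2$: $m_B^\vee=t^2-\tfrac12$, and $2m_Bm_B^\vee=2t^4-5t^2+2$.
\item $t^2-3t+3$: $m_B^\vee=t^2-t+\tfrac13$, and $3m_Bm_B^\vee=(t^2-3t+3)(3t^2-3t+1)=3t^4-12t^3+19t^2-12t+3$.
\end{itemize}
In these three cases the product has degree $4\ge\deg\Delta_K$, since $\deg\Delta_K\le 2g(K)=4$. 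Hence $\Delta_K$ is a rational multiple of the product. Since $\Delta_K$ is primitive with positive leading coefficient and each displayed integer polynomial is primitive, Gauss's lemma gives equality. This yields (i)--(iii).

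In the remaining case $m_B=t-2$, the product $m_Bm_B^\vee$ has degree $2$. Clearing denominators with Gauss's lemma again gives $2t^2-5t+2\mid\Delta_K(t)$ in $\Z[t]$, with no further restriction on $\Delta_K$. This yields (iv).
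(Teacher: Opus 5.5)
Your proposal is correct and follows the paper's proof. You reduce via Theorem \ref{main thm D} to $\deg m_B\le 2$, enumerate the four possible $m_B$, compute $m_Bm_B^\vee$, and use $\deg\Delta_K\le 4$ together with primitivity (Gauss's lemma) to pin down $\Delta_K$. Your explicit enumeration, including the real-root case $t^2-2$ and the exclusion of $b^2=4q$, supplies the details the paper compresses into ``a similar argument as in the proof of Corollary \ref{main cor E}.''
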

\begin{proof}
    By Theorem \ref{main thm D}, we only need to consider $\F_q$-simple factors $B$ of $A$ with $\sharp B(\F_q)=1$ and $\deg m_B \leq 2.$ By a similar argument as in the proof of Corollary \ref{main cor E}, the possibilities for $m_B(t)$ are precisely given by
    $$t^2 - 2t+2, ~~~t^2 -2,~~~t^2-3t+3,~~~\textrm{and}~~~t-2$$
    for $q=2,2,3,$ and $4,$ respectively. Again by direct computation, the possibilities for $m_B(t) m_B^{\vee}(t) $ are precisely given by
    $$t^4 -3t^3 +\frac{9}{2}t^2 -3t +1,~~~t^4 -\frac{5}{2}t^2 +1,~~~t^4 -4t^3 + \frac{19}{3}t^2 -4t +1,~~~\textrm{and}~~~t^2 -\frac{5}{2}t +1$$
    for $q=2,2,3,$ and $4,$ correspondingly. In the first three cases, since $\deg \Delta_K \leq 4$ and $\Delta_K(t)$ is primitive, we get the respective desired $\Delta_K(t)$ in the corollary. In the last case, we just obtain the divisibility $2t^2 -5t +2 \mid \Delta_K(t).$ 
    \vskip 0.1in
    The converse follows directly from Theorem \ref{main thm D}, and this completes the proof.
\end{proof}


Our next result is on the behavior of persistent $A$-colorability under connected sums of two knots.
\begin{proposition}\label{main prop B}
Let $A$ be an abelian variety over a finite field $\F_q$, and let $K_1$ and $K_2$ be two oriented knots. Then the connected sum $K_1 \# K_2$ is persistently $A$-colorable if and only if at least one of $K_1$ and $K_2$ is persistently $A$-colorable.    
\end{proposition}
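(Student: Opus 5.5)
The plan is to reduce everything to the criterion (iii) of Theorem \ref{main thm B}: $K$ is persistently $A$-colorable if and only if $\gcd(\Delta_K(t),P_A(t))\neq 1$ in $\Q[t]$. The knot-theoretic input is the classical multiplicativity of the Alexander polynomial under connected sum, $\Delta_{K_1\# K_2}(t)\doteq \Delta_{K_1}(t)\Delta_{K_2}(t)$ up to units $\pm t^n$ of $R$ \cite{Rol03}. This holds because the Alexander module of $K_1\# K_2$ is the direct sum of those of $K_1$ and $K_2$, and orders are multiplicative. Since both normalized polynomials have positive leading coefficient and nonzero constant term, and so does their product, we get the exact identity $\Delta_{K_1\# K_2}(t)=\Delta_{K_1}(t)\Delta_{K_2}(t)$ in $\Z[t]$. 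In any case, units of $\Q[t,t^{-1}]$ do not affect gcd's with $P_A(t)$, because $P_A(0)=q^g\neq 0$ means $t\nmid P_A(t)$.

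The core step is then a statement in the PID $\Q[t]$. For nonzero $f_1,f_2,h\in\Q[t]$, we have $\gcd(f_1f_2,h)\neq 1$ if and only if $\gcd(f_1,h)\neq 1$ or $\gcd(f_2,h)\neq 1$. If $\gcd(f_1f_2,h)\neq1$, take an irreducible $\pi$ dividing both $f_1f_2$ and $h$. Since $\pi$ is prime in $\Q[t]$, it divides $f_1$ or $f_2$, and so it is a common factor with $h$. Conversely, a nonconstant common factor of $f_i$ and $h$ also divides $f_1f_2$. We apply this with $f_i=\Delta_{K_i}(t)$ and $h=P_A(t)$, and then invoke Theorem \ref{main thm B} once for $K_1\# K_2$ and once for each $K_i$ to conclude.

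Alternatively, one could argue via resultants and Theorem \ref{main thm A}: by multiplicativity of the resultant, $R_{A,K_1\#K_2}=\pm R_{A,K_1}R_{A,K_2}$, and this vanishes if and only if one factor vanishes. Either route works. There is no serious obstacle. The only point needing care is justifying $\Delta_{K_1\#K_2}=\Delta_{K_1}\Delta_{K_2}$ with the paper's normalization, and I would simply cite it as standard from \cite{Rol03}.
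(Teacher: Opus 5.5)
Your proposal is correct and follows essentially the paper's argument: multiplicativity $\Delta_{K_1\#K_2}(t)=\Delta_{K_1}(t)\Delta_{K_2}(t)$, plus the fact that an irreducible element of $\Q[t]$ dividing a product divides one of the factors. The only difference is that the paper runs this through criterion (iv) of Theorem~\ref{main thm B}, using the irreducible $m_B(t)$ of a simple factor $B$ with $\sharp B(\F_q)=1$, whereas you use criterion (iii) with an arbitrary irreducible common factor of $\Delta_{K_1\#K_2}(t)$ and $P_A(t)$, which amounts to the same thing (your resultant variant via Theorem~\ref{main thm A} is also valid).
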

\begin{proof}
    Recall that we have $\Delta_{K_1 \# K_2} (t) = \Delta_{K_1}(t) \cdot \Delta_{K_2}(t).$ Now, suppose first that the connected sum $K_1 \# K_2$ is persistently $A$-colorable. Then by Theorem \ref{main thm B}, there is a simple isogeny factor $B$ of $A$ such that $\sharp B(\F_q)=1$ and $m_B(t) \mid \Delta_{K_1 \# K_2}(t)=\Delta_{K_1}(t) \cdot \Delta_{K_2}(t).$ Since $m_B(t)$ is irreducible over $\Q$, it follows that either $m_B(t) \mid \Delta_{K_1}(t)$ or $m_B(t) \mid \Delta_{K_2}(t).$ Then again by Theorem \ref{main thm B}, at least one of $K_1$ and $K_2$ is persistently $A$-colorable. The converse is similar.
    \vskip 0.1in
    This completes the proof.
\end{proof}

\begin{corollary}
    For each $q \in \{2,3,4 \},$ there exist an abelian variety $A$ over $\F_q$ and infinitely many oriented knots that are persistently $A$-colorable.
\end{corollary}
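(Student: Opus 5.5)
For each $q\in\{2,3,4\}$, fix the abelian variety $A$ over $\F_q$ and the oriented knot $K$ supplied by Proposition \ref{main prop A}, so that $K$ is persistently $A$-colorable. The plan is to produce infinitely many knots from $K$ by connected sums. For each integer $n\geq 1$, let $K_n=K\#K\#\cdots\#K$ be the connected sum of $n$ copies of $K$, and, more flexibly, let $K\#J$ with $J$ an arbitrary oriented knot.

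The first step is to show that each such knot is persistently $A$-colorable. For $K\#J$ this follows directly from Proposition \ref{main prop B}, since $K$ is persistently $A$-colorable. For $K_n$ we induct on $n$: the case $n=1$ is the choice of $K$, and the inductive step is Proposition \ref{main prop B} applied to $K_{n-1}$ and $K$. Alternatively, one can argue directly. Since $\Delta_{K_n}(t)=\Delta_K(t)^n$, the polynomial $m_A(t)$ divides $\Delta_{K_n}(t)$, and Theorem \ref{main thm B} then gives persistent colorability.

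The second step is to check that the knots are pairwise distinct. Here I use the Alexander polynomial as an invariant: $\deg\Delta_{K_n}=n\deg\Delta_K$. In the three cases of Proposition \ref{main prop A}, $\deg\Delta_K\in\{2,4\}$ is positive, so these degrees are pairwise different. Hence the $K_n$ are pairwise non-isotopic, and we obtain infinitely many persistently $A$-colorable knots.

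The only point that needs care, and the main obstacle, is this distinctness. It requires $\Delta_K$ to be nonconstant. That holds here because persistent colorability forces $\gcd(\Delta_K,P_A)\neq 1$ by Theorem \ref{main thm B}, and a constant $\Delta_K=1$ cannot share a factor with $P_A$. If one prefers the family $K\#J$, distinctness can likewise be read off from the degrees of the Alexander polynomials by letting $J$ range over twist knots or torus knots of increasing genus.
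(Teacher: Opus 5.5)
Your proof is correct. Its first half matches the paper's: both take the pair $(A,K)$ from Proposition \ref{main prop A} and carry persistent colorability through connected sums using Proposition \ref{main prop B}.

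The two arguments differ in how they show the knots are pairwise distinct.
\begin{itemize}
\item The paper sums $K$ with an arbitrary infinite family of pairwise distinct prime knots $L_n$. It then invokes Schubert's unique prime decomposition theorem to conclude that the $K\# L_n$ are distinct.
\item You take $K^{\# n}$ and separate these knots by the degree $n\deg\Delta_K$ of $\Delta_{K^{\# n}}=\Delta_K^{\,n}$. You correctly note first that $\Delta_K$ is nonconstant, since $m_A\mid\Delta_K$.
\end{itemize}
Your route is more self-contained. It needs only the multiplicativity of the Alexander polynomial under connected sum, which the paper already uses in proving Proposition \ref{main prop B}, rather than a deep theorem of $3$-manifold topology. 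The paper's route is more flexible: any infinite supply of distinct prime knots works, and nothing about $\Delta_K$ is needed.

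Your closing aside has one slip. Twist knots all have genus one and $\deg\Delta_{J_n}=2$, so for the family $K\# J_n$ the degrees do not grow. Distinctness there would have to come from the full polynomial instead, for example from the leading coefficient, which is $n$ times that of $\Delta_K$. Torus knots of increasing genus do work as you describe. This does not affect your main argument.
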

\begin{proof}
    By Proposition \ref{main prop A}, there exists an abelian variety $A$ over $\F_q$ and an oriented knot $K$ such that $K$ is persistently $A$-colorable. Let $(L_n)_{n \geq 1}$ be a family of pairwise distinct oriented prime knots.  Then by Schubert's prime decomposition theorem for knots \cite{Schubert} and Proposition \ref{main prop B}, the family $(K \# L_n)_{n \geq 1}$ gives rise to infinitely many distinct oriented knots which are persistently $A$-colorable. 
\end{proof}



\section{Frobenius Alexander quandles and isogeny criteria}
In this section, we establish rigidity and reconstruction results for Frobenius Alexander quandles and derive criteria for determining whether two abelian varieties over a finite field are isogenous. Throughout this section, let $A$ and $B$ be abelian varieties over a finite field $\F_q$ with $p=\mathrm{char}(\F_q)$, and let $\ell \ne p$ be a rational prime, unless otherwise stated. In the previous sections, the Frobenius Alexander quandle $Q(A[\ell](\overline{\F}_q), \pi_{A,\ell})$ was used as the target of knot colorings. In this section, we study some arithmetic properties of this target quandle itself. To this aim, we first recall the following classical result of Nelson.
\begin{proposition}[\cite{Nel03}]\label{main prop C}
    Two finite Alexander quandles $M$ and $N$ of the same cardinality are isomorphic as quandles if and only if there exists an isomorphism of $R = \Z[t, t^{-1}]$-modules $\varphi \colon (1-t)M \rightarrow (1-t)N.$
\end{proposition}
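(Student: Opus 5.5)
The forward direction will be a direct computation. Let $f \colon M \to N$ be a quandle isomorphism. Since translations $x \mapsto x+a$ are quandle automorphisms of an Alexander quandle, I may replace $f$ by $x \mapsto f(x)-f(0)$ and so assume $f(0)=0$. Then
$$f(tx)=f(x*0)=f(x)*0=tf(x), \qquad f((1-t)y)=f(0*y)=(1-t)f(y).$$
Applying $f$ to $x*y=tx+(1-t)y$ gives $f(tx+(1-t)y)=f(tx)+f((1-t)y)$. Since $t$ is bijective on $M$, this says $f(u+v)=f(u)+f(v)$ whenever $v \in (1-t)M$. In particular $f$ restricted to $(1-t)M$ is additive and commutes with $t$, so it is $R$-linear. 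It lands in $(1-t)N$ by the second identity above. The same argument applied to $f^{-1}$ shows that this restriction is an isomorphism $(1-t)M \to (1-t)N$.

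For the converse, let $h\colon (1-t)M\to(1-t)N$ be an $R$-module isomorphism with $\sharp M=\sharp N$. Then $M/(1-t)M$ and $N/(1-t)N$ have the same order, and so do $\ker(1-t)|_M$ and $\ker(1-t)|_N$. I would choose coset representatives $\{m_i\}$ of $(1-t)M$ in $M$ and $\{n_i\}$ of $(1-t)N$ in $N$, indexed by the same set, and define
$$f(m_i+u)=n_i+h(u), \qquad u\in(1-t)M.$$
This is a bijection. Write $x=m_i+u$, $y=m_j+v$ and use $tm_i=m_i-(1-t)m_i$. A direct expansion shows that $f(x*y)=f(x)*f(y)$ holds if and only if
$$h\bigl((1-t)(m_j-m_i)\bigr)=(1-t)(n_j-n_i)\quad\text{for all } i,j.$$

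The main obstacle is choosing the two sets of representatives so that this compatibility holds. Fix one base point $m_0$ with $n_0$. It then suffices to arrange $(1-t)(n_j-n_0)=h\bigl((1-t)(m_j-m_0)\bigr)$ for each $j$, because the general condition follows by subtracting two such equations. So I need to choose $n_j$ inside the preimage under $1-t$ of the prescribed element $h((1-t)(m_j-m_0))\in(1-t)N$. That preimage is nonempty and is a coset of $\ker(1-t)|_N$. The danger is that distinct $m_j$ might force the $n_j$ into the same coset of $(1-t)N$, which would destroy injectivity of $f$.

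My first attempt would be to avoid choosing representatives one by one. Instead I would count fibers: the map $x\mapsto(1-t)x$ sends $M$ onto $(1-t)M$ with fibers of size $\sharp\ker(1-t)|_M$, and $N$ onto $(1-t)N$ with fibers of the same size. I would then define a bijection $\beta\colon M\to N$ fiberwise, covering $h$, so that $(1-t)\beta(x)=h((1-t)x)$. After that I would try to modify $\beta$ by elements of $(1-t)N$ to obtain a quandle map, and check the distinct-coset condition by comparing the induced maps $M/(1-t)M\to N/(1-t)N$. If this counting argument does not close the gap, I would fall back on the structure of finite $R$-modules as in \cite{Nel03}.
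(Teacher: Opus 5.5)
Your forward direction is correct and complete. The paper gives no proof of this proposition and only quotes it from Nelson's paper. The converse, however, has a genuine gap. You correctly reduce it to choosing representatives $n_c$ with $(1-t)n_c=h((1-t)m_c)$ that lie in pairwise distinct cosets of $(1-t)N$. That choice is the entire content of the converse, and your fiber-counting plan does not supply it. Write $K_M=\ker(1-t)|_M$. The fibers of $x\mapsto(1-t)x$ are cosets of $K_M$, so a fiberwise bijection $\beta$ covering $h$ respects cosets of $K_M$ and $K_N$, not cosets of $(1-t)M$ and $(1-t)N$. Translating $\beta$ by elements of $(1-t)N$ then destroys $(1-t)\beta(x)=h((1-t)x)$ unless those elements lie in $K_N$. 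Falling back on module structure does not help by itself either. $M$ and $N$ need not be isomorphic as $R$-modules (take $\Z/4$ and $(\Z/2)^2$ with $t=1$), so the quandle isomorphism you want is in general not additive, and some non-linear choice of representatives is unavoidable.

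The missing idea is the intermediate quotient $M/((1-t)M+K_M)$.
\begin{itemize}
\item[(a)] Let $\delta_M\colon M/K_M\to(1-t)M$ be the isomorphism induced by $1-t$, and set $\theta=\delta_N^{-1}\circ h\circ\delta_M$. This is an $R$-isomorphism $M/K_M\to N/K_N$. It therefore maps $(1-t)(M/K_M)=((1-t)M+K_M)/K_M$ onto the corresponding submodule of $N/K_N$. Hence it induces an isomorphism $\bar\theta\colon M/((1-t)M+K_M)\to N/((1-t)N+K_N)$.
\item[(b)] Fix $c\in M/(1-t)M$ with representative $m_c$. The solutions $n$ of $(1-t)n=h((1-t)m_c)$ form one coset of $K_N$. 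Its image in $N/(1-t)N$ is exactly the fiber of the projection $N/(1-t)N\to N/((1-t)N+K_N)$ over $\bar\theta$ of the class of $c$.
\item[(c)] These fibers all have $\sharp K_N/\sharp(K_N\cap(1-t)N)$ elements. The corresponding fibers on the $M$-side have $\sharp K_M/\sharp(K_M\cap(1-t)M)$ elements. The two numbers agree for two reasons. First, $\sharp K_M=\sharp M/\sharp\,((1-t)M)=\sharp K_N$, which is where $\sharp M=\sharp N$ enters. Second, $h$ maps $K_M\cap(1-t)M=\ker(1-t)|_{(1-t)M}$ onto $K_N\cap(1-t)N$.
\end{itemize}
You can therefore choose a bijection $\sigma\colon M/(1-t)M\to N/(1-t)N$ covering $\bar\theta$ fiber by fiber. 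Then pick $n_c\in\sigma(c)$ with $(1-t)n_c=h((1-t)m_c)$. With these choices your formula $f(m_c+u)=n_c+h(u)$ is bijective, and your own compatibility computation shows it is a quandle isomorphism.
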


Using Proposition \ref{main prop C}, we obtain the following rigidity theorem.
\begin{theorem}\label{main thm F}
    Let $\ell \ne p$ be a rational prime. Let $V$ and $W$ be finite dimensional vector spaces over $\F_{\ell},$ and let $S \in \mathrm{GL}(V)$ and $T \in \mathrm{GL}(W)$. Then the following statements are equivalent.
    \vskip 0.1in
    (i) $Q(V,S) \cong Q(W,T)$ as quandles.
    \vskip 0.1in
    (ii) $V \cong W$ as $R_{\ell}=\F_{\ell}[t,t^{-1}]$-modules, where $t$ acts on $V$ (resp.\ $W$) via $S$ (resp.\ $T$).
    \vskip 0.1in
    (iii) There exists an isomorphism $\varphi \colon V \rightarrow W$ of $\F_{\ell}$-vector spaces such that $T \circ \varphi = \varphi \circ S.$
\end{theorem}
\begin{proof}
    The equivalence of (ii) and (iii) follows from $\F_{\ell}[t,t^{-1}]$-module structure of $V$ and $W.$ Now, suppose first that there exists an $R_{\ell}$-module isomorphism $\varphi \colon V \rightarrow W$. Then again by the $R_{\ell}$-module structure of $V$ and $W,$ we get $T \circ \varphi = \varphi \circ S.$ It follows that we have
    $$\varphi (x *_S y) = \varphi (S(x)+(1-S)(y)) = \varphi(S(x))+\varphi((1-S)(y)) = T(\varphi(x))+(1-T)(\varphi(y)) = \varphi(x)*_T \varphi(y),$$
    and hence, $\varphi$ induces an isomorphism $Q(V,S) \cong Q(W,T)$ as quandles. This proves (ii) $\Rightarrow$ (i). Finally, it remains to show that (i) implies (ii). To this aim, suppose that $Q(V,S) \cong Q(W,T)$ as quandles. Then clearly, $\sharp V = \sharp W,$ and hence, we get $\dim_{\F_{\ell}} V = \dim_{\F_{\ell}} W.$ By Proposition \ref{main prop C}, there exists a $\Lambda=\Z[t,t^{-1}]$-module isomorphism $\varphi \colon (1-t)V \rightarrow (1-t)W.$ Since both $\Lambda$-modules are annihilated by $\ell$, and since $R_{\ell} = \Lambda/\ell \Lambda,$ we can see that $\varphi$ is also an $R_{\ell}$-module isomorphism. Now, since $R_{\ell}$ is a PID, and $V,W$ are finitely generated $R_{\ell}$-modules, by the fundamental theorem on finitely generated modules over PID, we may write
    $$V \cong V_{t-1} \oplus V^{\prime}~~~\textrm{and}~~~W \cong W_{t-1} \oplus W^{\prime} $$
    where $V_{t-1} = \{v \in V : (t-1)^n  v = 0~\textrm{for some}~n \geq 1 \} \cong \bigoplus_{j=1}^{\infty} (R_{\ell}/(t-1)^j )^{m_j}$ and $W_{t-1} = \{w \in W : (t-1)^n w = 0~\textrm{for some}~n \geq 1 \}\cong \bigoplus_{j=1}^{\infty} (R_{\ell}/(t-1)^j)^{n_j}$ for some integers $m_j, n_j.$ Then note that multiplication by $1-t$ is an automorphism on $V^{\prime}$ and $W^{\prime},$ and we also have
    $$(1-t)V_{t-1} \cong \bigoplus_{j=2}^{\infty} (R_{\ell}/(t-1)^{j-1})^{m_j}~~~\textrm{and}~~~(1-t)W_{t-1} \cong \bigoplus_{j=2}^{\infty} (R_{\ell}/(t-1)^{j-1})^{n_j}.$$
    Then since $(1-t)V \cong (1-t)W$ as $R_{\ell}$-modules, by the uniqueness of the primary decomposition, it follows that $V^{\prime} \cong W^{\prime}$ as $R_{\ell}$-modules, and $m_j = n_j$ for all $j \geq 2.$ 
    \vskip 0.1in
    Finally, since $\dim_{\F_{\ell}}V = \dim_{F_{\ell}} W ,$ we also have $m_1 = n_1.$ Hence, we see that $V_{t-1} \cong W_{t-1}$, whence $V \cong W$ as $R_{\ell}$-modules. This proves (i) $\Rightarrow$ (ii). 
    \vskip 0.1in
    This completes the proof.
\end{proof}
By taking $V=A[\ell](\overline{\F}_q), S=\pi_{A,\ell}$ and $W=B[\ell](\overline{\F}_q), T=\pi_{B,\ell}$ in Theorem \ref{main thm F}, we have the following.
\begin{corollary}\label{main cor H}
    Let $A$ and $B$ be abelian varieties over a finite field $\F_q$, and let $\ell \ne p$ be a rational prime. Then $Q(A[\ell](\overline{\F}_q),\pi_{A,\ell}) \cong Q(B[\ell](\overline{\F}_q),\pi_{B,\ell})$ as quandles if and only if there exists an isomorphism $\varphi \colon A[\ell](\overline{\F}_q) \rightarrow B[\ell](\overline{\F}_q)$ as $\F_\ell$-vector spaces such that $\pi_{B,\ell} \circ \varphi = \varphi \circ \pi_{A,\ell}.$ In particular, if $Q(A[\ell](\overline{\F}_q),\pi_{A,\ell}) \cong Q(B[\ell](\overline{\F}_q),\pi_{B,\ell})$ as quandles, then $\overline{P}_A(t) = \overline{P}_B(t)$ in $\F_{\ell}[t].$
\end{corollary}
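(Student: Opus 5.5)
The plan is to specialize Theorem \ref{main thm F} and then read off characteristic polynomials. Take $V=A[\ell](\overline{\F}_q)$ with $S=\pi_{A,\ell}$, and $W=B[\ell](\overline{\F}_q)$ with $T=\pi_{B,\ell}$. By Proposition \ref{av prop}(a), both are finite-dimensional $\F_\ell$-vector spaces, of dimensions $2\dim A$ and $2\dim B$. Both maps are invertible: $\pi_A$ is an isogeny whose kernel is a finite connected group scheme of $p$-power order, so it has no nonzero $\ell$-torsion points. More concretely, $P_A(0)=q^g$ is prime to $\ell$, so $0$ is not an eigenvalue of $\pi_{A,\ell}$. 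Hence $S\in\mathrm{GL}(V)$ and $T\in\mathrm{GL}(W)$, and Theorem \ref{main thm F} applies. The equivalence (i)$\Leftrightarrow$(iii) of that theorem is then exactly the first assertion of the corollary.

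For the ``in particular'' part, a quandle isomorphism yields an $\F_\ell$-isomorphism $\varphi$ with $\pi_{B,\ell}=\varphi\circ\pi_{A,\ell}\circ\varphi^{-1}$. So $\pi_{A,\ell}$ and $\pi_{B,\ell}$ are similar and have the same characteristic polynomial in $\F_\ell[t]$; in particular $\dim A=\dim B$. It remains to identify the characteristic polynomial of $\pi_{A,\ell}$ on $A[\ell](\overline{\F}_q)$ with $\overline{P}_A(t)$.

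For that identification I will use the standard fact (\cite[\S16]{EGMpdf}) that $P_A(t)$ is the characteristic polynomial of $\pi_A$ acting on the Tate module $T_\ell A\cong\Z_\ell^{2g}$, with coefficients in $\Z$ independent of $\ell$. Since $A[\ell](\overline{\F}_q)\cong T_\ell A/\ell T_\ell A$ compatibly with $\pi_A$, reducing a $\Z_\ell$-matrix of $\pi_A$ on $T_\ell A$ modulo $\ell$ gives a matrix of $\pi_{A,\ell}$. Taking determinants commutes with reduction, so the characteristic polynomial of $\pi_{A,\ell}$ is $\overline{P}_A(t)$, and likewise for $B$. Combined with the similarity above, this gives $\overline{P}_A(t)=\overline{P}_B(t)$.

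The main obstacle is this last step: making precise that the integral polynomial $P_A$, which is defined via degrees $\deg(n-\pi_A)$ or via the Tate module, reduces to the characteristic polynomial on $\ell$-torsion. I would handle it by quoting the Tate-module characterization of $P_A$ rather than reproving it. The invertibility check and the invocation of Theorem \ref{main thm F} are routine.
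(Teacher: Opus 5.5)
Your proposal is correct and follows the paper's route: the paper obtains this corollary simply by specializing Theorem \ref{main thm F} to $V=A[\ell](\overline{\F}_q)$, $S=\pi_{A,\ell}$, $W=B[\ell](\overline{\F}_q)$, $T=\pi_{B,\ell}$. Your extra checks are sound and are exactly the details the paper leaves implicit: that $\pi_{A,\ell}$ is invertible, and that its characteristic polynomial on $A[\ell](\overline{\F}_q)\cong T_\ell A/\ell T_\ell A$ is $\overline{P}_A(t)$.
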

In other words, the isomorphism class of the abstract Frobenius Alexander quandle $Q(A[\ell](\overline{\F}_q),\pi_{A,\ell})$ determines the similarity class of $\pi_{A,\ell}.$ 

\begin{theorem}\label{main thm G}
    Let $A$ and $B$ be abelian varieties of dimension $g$ over $\F_q,$ and let $\Sigma$ be a finite set of rational primes different from $p.$ Assume that 
    $$\prod_{\ell \in \Sigma} \ell > 2 \binom{2g}{g} q^{\frac{g}{2}}.$$
    If $Q(A[\ell](\overline{\F}_q),\pi_{A,\ell}) \cong Q(B[\ell](\overline{\F}_q),\pi_{B,\ell})$ as quandles for all $\ell \in \Sigma,$ then $A$ and $B$ are $\F_q$-isogenous.
\end{theorem}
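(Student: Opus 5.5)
By Corollary \ref{main cor H}, the quandle isomorphism at each $\ell\in\Sigma$ gives $\overline{P}_A(t)=\overline{P}_B(t)$ in $\F_\ell[t]$. So $P_A(t)\equiv P_B(t)\pmod{\ell}$ coefficientwise for every $\ell\in\Sigma$. Since the primes in $\Sigma$ are distinct, the Chinese remainder theorem gives $P_A(t)\equiv P_B(t)\pmod{N}$ coefficientwise, where $N=\prod_{\ell\in\Sigma}\ell$. The plan is to show that the coefficients of $P_A$ and $P_B$ are too small to differ by a nonzero multiple of $N$. This forces $P_A=P_B$, and Tate's isogeny theorem \cite{Tate66} then gives that $A$ and $B$ are $\F_q$-isogenous.

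For the size estimate, write $P_A(t)=\sum_{k=0}^{2g}a_k t^{2g-k}$. The coefficient $a_k$ is $\pm$ the $k$-th elementary symmetric function of the $2g$ roots, each of absolute value $\sqrt{q}$. This gives the crude bound $|a_k|\le\binom{2g}{k}q^{k/2}$.

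This is not yet good enough, since for $k>g$ the factor $q^{k/2}$ exceeds $q^{g/2}$. Here I would use the functional equation $t^{2g}P_A(q/t)=q^gP_A(t)$, which gives $a_{2g-k}=q^{g-k}a_k$ for all $k$. Hence $P_A$ is determined by $a_0,\dots,a_g$, and likewise $P_B$ by $b_0,\dots,b_g$. It therefore suffices to prove $a_k=b_k$ for $0\le k\le g$. For these indices,
$$|a_k-b_k|\le 2\binom{2g}{k}q^{k/2}\le 2\binom{2g}{g}q^{g/2}<N,$$
using that $\binom{2g}{k}\le\binom{2g}{g}$ and $q^{k/2}\le q^{g/2}$. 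Since $N\mid a_k-b_k$ and $|a_k-b_k|<N$, we conclude $a_k=b_k$. The equality for the remaining indices $k>g$ then follows from the relation $a_{2g-k}=q^{g-k}a_k$ and its analogue for $B$.

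The main obstacle I expect is this reduction to $k\le g$. The hypothesis is calibrated exactly to $2\binom{2g}{g}q^{g/2}$, so the argument has to use the symmetry of $P_A$ rather than bounding all coefficients directly. I would state the relation $a_{2g-k}=q^{g-k}a_k$ carefully and check that it holds for both $A$ and $B$, which have the same $q$ and the same $g$. The final step is to invoke Tate's theorem that $P_A=P_B$ implies $A$ and $B$ are $\F_q$-isogenous. The paper cites this via \cite{Tate66} but has not stated it, so it should be cited explicitly at that point.
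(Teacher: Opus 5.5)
Your proposal is correct and follows essentially the same route as the paper. Both arguments use Corollary \ref{main cor H} to get $\overline{P}_A=\overline{P}_B$ modulo each $\ell\in\Sigma$, hence $\prod_{\ell\in\Sigma}\ell \mid a_k-b_k$; then the Hasse--Weil coefficient bound forces $a_k=b_k$ for $k\le g$, the functional equation $a_{2g-k}=q^{g-k}a_k$ gives the remaining coefficients, and Tate's theorem yields the isogeny.
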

\begin{proof}
   Suppose that $P_A(t)=t^{2g}+a_1 t^{2g-1} + \cdots +a_{2g-1}t +  q^g$ and $P_B(t)=t^{2g}+b_1 t^{2g-1} + \cdots +b_{2g-1}t+ q^g$ for some integers $a_i, b_j$ with $i,j \in \{1,2,\cdots,2g-1\}$. By Corollary \ref{main cor H}, the mod $\ell$-reductions $\overline{P}_A(t)$ and $\overline{P}_B(t)$ are equal so that we have
   $$t^{2g} + \overline{a_1} t^{2g-1} + \cdots + \overline{q^g} = t^{2g} + \overline{b_1} t^{2g-1} + \cdots + \overline{q^g} $$
   in $\F_{\ell}[t].$ Hence $a_i \equiv b_i \pmod{\ell}$ for all $i \in \{1,2,\cdots,2g-1\}.$ Since $\ell \in \Sigma$ was arbitrary, it follows that 
   $$a_i \equiv b_i \pmod{\ell}$$
   for all $i \in \{1,2,\cdots,2g-1\}$ and all $\ell \in \Sigma$. In particular, $\prod_{\ell \in \Sigma} \ell$ divides $a_i - b_i$ for all $i \in \{1,2,\cdots, 2g-1\}.$ On the other hand, for each $1 \leq i \leq g,$ by Hasse-Weil bound, we have
   $$|a_i - b_i | \leq |a_i|+|b_i| \leq 2 \cdot \binom{2g}{i} q^{\frac{i}{2}} \leq 2 \cdot \binom{2g}{g} q^{\frac{g}{2}}< \prod_{\ell \in \Sigma} \ell.$$
   Since $\prod_{\ell \in \Sigma} \ell $ divides $a_i -b_i$ for each $1 \leq i \leq g,$ it follows that $a_i = b_i$ for all $i \in \{1,2,\cdots, g\}.$ Now, since we have
   $$a_{2g-i} = q^{g-i}a_i~~~\textrm{and}~~b_{2g-i} =q^{g-i}b_i$$
   for each $1 \leq i \leq g-1,$ we also have $a_i = b_i$ for all $i  \in \{g+1, g+2, \cdots, 2g-1 \}.$ Therefore we conclude that $P_A(t)=P_B(t)$ in $\Z[t]$, and hence, $A$ and $B$ are isogenous over $\F_q$ by \cite[Theorem 1-(c)]{Tate66}.
\vskip 0.1in
This completes the proof.
\end{proof}
   

By taking $\Sigma$ to be a singleton in Theorem \ref{main thm G}, we immediately get the following corollary.
\begin{corollary}\label{main cor I}
     Let $A$ and $B$ be abelian varieties of dimension $g$ over a finite field $\F_q$. Let $\ell \ne p$ be a rational prime with $\ell > 2 \binom{2g}{g} q^{\frac{g}{2}}.$ If $Q(A[\ell](\overline{\F}_q), \pi_{A,\ell}) \cong Q(B[\ell](\overline{\F}_q), \pi_{B,\ell})$ as quandles, then $A$ and $B$ are $\F_q$-isogenous.   
\end{corollary}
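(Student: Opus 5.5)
This is the special case $\Sigma=\{\ell\}$ of Theorem \ref{main thm G}, so the plan is to check its hypotheses and apply it. Take $\Sigma=\{\ell\}$. This is a finite set of rational primes, and it avoids $p$ because $\ell\ne p$ by assumption. The product condition $\prod_{\ell'\in\Sigma}\ell' > 2\binom{2g}{g}q^{g/2}$ becomes exactly the assumed inequality $\ell>2\binom{2g}{g}q^{g/2}$. The quandle isomorphism hypothesis is required only for the single prime $\ell\in\Sigma$, and that is what we are given. Theorem \ref{main thm G} then yields that $A$ and $B$ are $\F_q$-isogenous.

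For a self-contained account, the underlying argument runs as follows.

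\begin{enumerate}
\item By Corollary \ref{main cor H}, the quandle isomorphism gives $\overline{P}_A=\overline{P}_B$ in $\F_\ell[t]$. So each pair of corresponding coefficients satisfies $a_i\equiv b_i \pmod{\ell}$.
\item The Weil bound gives $|a_i-b_i|\le 2\binom{2g}{i}q^{i/2}\le 2\binom{2g}{g}q^{g/2}<\ell$ for $1\le i\le g$. Since $\ell$ divides $a_i-b_i$, this forces $a_i=b_i$.
\item The functional equation $a_{2g-i}=q^{g-i}a_i$ recovers the remaining coefficients. Hence $P_A=P_B$.
\item Tate's theorem then gives the isogeny.
\end{enumerate}

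The only point needing care is the inequality $\binom{2g}{i}q^{i/2}\le\binom{2g}{g}q^{g/2}$ for $i\le g$. It holds because binomial coefficients increase up to the middle index and $q\ge 2$. Beyond that there is no real obstacle, since the corollary is an immediate specialization of Theorem \ref{main thm G}.
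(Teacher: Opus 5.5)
Your proposal is correct and is exactly the paper's route. The paper obtains this corollary by taking $\Sigma=\{\ell\}$ in Theorem \ref{main thm G}. Your unpacked argument (Corollary \ref{main cor H}, the Weil bound on $|a_i-b_i|$, the functional equation, then Tate's theorem) reproduces the paper's proof of that theorem step for step.
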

In other words, for any prime $\ell \ne p$ with $\ell >2 \binom{2g}{g} q^{\frac{g}{2}},$ the isomorphism class of the Frobenius Alexander quandle $Q(A[\ell](\overline{\F}_q),\pi_{A,\ell})$ determines the $\F_q$-isogeny class of $A$ among $g$-dimensional abelian varieties over $\F_q.$
    
\vskip 0.1in
The next corollary provides a criterion for determining whether two abelian varieties over $\F_q$ are $\F_q$-isogenous, using Frobenius Alexander quandles.
\begin{corollary}\label{main cor J}
Let $A$ and $B$ be abelian varieties over a finite field $\F_q$. Then the following are equivalent.
\vskip 0.05in
(i) $A$ and $B$ are $\F_q$-isogenous.
\vskip 0.05in

(ii) $Q(A[\ell](\overline{\F}_q), \pi_{A,\ell}) \cong Q(B[\ell](\overline{\F}_q) , \pi_{B,\ell} ) $ as quandles for infinitely many primes $\ell \ne p.$
\vskip 0.05in
(iii) $Q(A[\ell](\overline{\F}_q), \pi_{A,\ell}) \cong Q(B[\ell](\overline{\F}_q) , \pi_{B,\ell} ) $ as quandles for all but finitely many primes $\ell \ne p.$    
\end{corollary}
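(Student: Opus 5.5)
The plan is to prove the cycle (i) $\Rightarrow$ (iii) $\Rightarrow$ (ii) $\Rightarrow$ (i). Throughout, Theorem \ref{main thm F} and Corollary \ref{main cor H} let us replace the quandle isomorphism by a Frobenius-equivariant linear isomorphism of $\ell$-torsion. One point needs care: unlike Theorem \ref{main thm G} and Corollary \ref{main cor I}, the statement does not assume $\dim A=\dim B$. So in the direction (ii) $\Rightarrow$ (i) we first have to recover the equality of dimensions.

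For (i) $\Rightarrow$ (iii), fix an $\F_q$-isogeny $\varphi\colon A\to B$ and let $d=\deg\varphi$. Since $\varphi$ is surjective with $\dim A=\dim B$, Proposition \ref{av prop}(a) gives $A[\ell](\overline{\F}_q)\cong\F_\ell^{2g}\cong B[\ell](\overline{\F}_q)$ for every prime $\ell\neq p$. For a prime $\ell\nmid dp$, the restriction $\varphi|_{A[\ell]}\colon A[\ell](\overline{\F}_q)\to B[\ell](\overline{\F}_q)$ is injective, because its kernel lies in $\ker\varphi$, whose order divides $d$ (the kernel is a finite group scheme of order $d$), and it is an $\ell$-group. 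Comparing cardinalities, the restriction is therefore an $\F_\ell$-linear isomorphism. Since $\varphi$ is defined over $\F_q$, it commutes with the $q$-power Frobenius: $\pi_B\circ\varphi=\varphi\circ\pi_A$. Restricting to $\ell$-torsion gives $\pi_{B,\ell}\circ\varphi=\varphi\circ\pi_{A,\ell}$. Theorem \ref{main thm F}, (iii) $\Rightarrow$ (i), then yields the quandle isomorphism for all primes $\ell\nmid dp$, which excludes only finitely many primes. (Alternatively one could argue through $P_A=P_B$ and semisimplicity of Frobenius, but the direct restriction argument avoids any semisimplicity issue mod $\ell$.)

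The step (iii) $\Rightarrow$ (ii) is immediate, since there are infinitely many primes $\ell\neq p$.

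For (ii) $\Rightarrow$ (i), take any single prime $\ell$ from the infinite set in (ii). A quandle isomorphism is a bijection, so $\ell^{2\dim A}=\ell^{2\dim B}$, and hence $\dim A=\dim B=:g$. Because the set in (ii) is infinite, we can choose $\ell$ in it with $\ell>2\binom{2g}{g}q^{g/2}$. Corollary \ref{main cor I} then shows that $A$ and $B$ are $\F_q$-isogenous.

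The only genuinely non-formal step is the injectivity of $\varphi$ on $\ell$-torsion for $\ell\nmid\deg\varphi$. I would justify it by the standard fact that $\ker\varphi$ is a finite group scheme of order $\deg\varphi$, so its $\overline{\F}_q$-points form a group whose order divides $\deg\varphi$. If one prefers to avoid this fact, a fallback is to use a quasi-inverse isogeny $\psi$ with $\psi\circ\varphi=[d]_A$. Since $[d]_A$ is invertible on $A[\ell]$ for $\ell\nmid d$, this directly gives injectivity of $\varphi|_{A[\ell]}$.
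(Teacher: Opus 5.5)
Your proof is correct and follows the paper's argument: the same cycle (i) $\Rightarrow$ (iii) $\Rightarrow$ (ii) $\Rightarrow$ (i), with $\dim A=\dim B$ recovered from $\ell^{2\dim A}=\ell^{2\dim B}$; the only cosmetic difference is that you pick one prime $\ell>2\binom{2g}{g}q^{g/2}$ from the infinite set and invoke Corollary~\ref{main cor I}, whereas the paper takes a finite set $\Sigma$ with large product and invokes Theorem~\ref{main thm G}. Your explicit argument that $\varphi$ restricts to an isomorphism $A[\ell](\overline{\F}_q)\to B[\ell](\overline{\F}_q)$ for $\ell\nmid\deg\varphi$ fills in a step the paper leaves implicit.
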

\begin{proof}
 We first take $\Sigma$ to be any finite subset of the set of infinitely many rational primes different from $p$ for which the two Frobenius Alexander quandles are isomorphic. Note that for every $\ell \in \Sigma,$ we have 
 $$\ell^{2 \dim A} = \sharp A[\ell](\overline{\F}_q) = \sharp B[\ell](\overline{\F}_q) = \ell^{2 \dim B}, $$
 and hence, we get $\dim A = \dim B.$ Now, by adding more rational primes if necessary, we may choose $\Sigma$ to satisfy $\prod_{\ell \in \Sigma} \ell > 2 \binom{2g}{g} q^{\frac{g}{2}}$, where $g=\dim A= \dim B.$ Then by Theorem \ref{main thm G}, $A$ and $B$ are $\F_q$-isogenous. This proves (ii) $\Rightarrow$ (i). Clearly, (iii) implies (ii). It remains to prove that (i) implies (iii). Since $A$ and $B$ are $\F_q$-isogenous, there exists an isogeny $\varphi \colon A \rightarrow B$ over $\F_q$. For every rational prime $\ell \ne p$ with $\ell \nmid \deg \varphi,$ the restriction map $\varphi_{\ell} \colon A[\ell](\overline{\F}_q) \rightarrow B[\ell](\overline{\F}_q)$ commutes with the respective Frobenius maps $\pi_{A,\ell}$ and $\pi_{B,\ell},$ and hence, it follows that $Q(A[\ell](\overline{\F}_q), \pi_{A,\ell}) \cong Q(B[\ell](\overline{\F}_q), \pi_{B,\ell})$ as quandles. Since there are only finitely many rational primes $\ell \ne p$ dividing $\deg \varphi,$ we obtain (iii). 
\vskip 0.1in
    This completes the proof.   
\end{proof}
\begin{remark}
The assumption that $A$ and $B$ are defined over the same finite field can be omitted in the following sense: let $A$ be an abelian variety over a finite field $\F_q$, and let $B$ be an abelian variety over a finite field $\F_r$. If $Q(A[\ell](\overline{\F}_q),\pi_{A,\ell}) \cong Q(B[\ell](\overline{\F}_r), \pi_{B,\ell})$ for infinitely many rational primes $\ell$ with $\ell \nmid q r,$ then $\dim A = \dim B$, $q=r,$ and $A$ and $B$ are isogenous over $\F_q.$ Indeed, the quandle isomorphisms imply that $\dim A = \dim B$, and then by Corollary \ref{main cor H}, we have
$$q^{\dim A} = P_A(0) \equiv P_B(0) = r^{\dim B} \pmod{\ell}$$
for infinitely many rational primes $\ell$, and hence, it follows that $r=q.$ Finally, $A$ and $B$ are $\F_q$-isogenous by Corollary \ref{main cor J}.
\end{remark}


 

    

\begin{corollary}\label{main cor K}
    Let $A$ and $B$ be abelian varieties of dimension $g$ over a finite field $\F_q$, and let $K$ be an oriented knot. Assume that $R_{A,K} \ne 0$ and $R_{B,K} \ne 0.$ Let $\Sigma$ be a finite set of rational primes $\ell \ne p$ such that $\ell \nmid R_{A,K}\cdot R_{B,K}$ and $\prod_{\ell \in \Sigma} \ell > 2 \binom{2g}{g} q^{\frac{g}{2}}.$ If $Q(V_{K,A,\ell}, \widetilde{\pi}_{A,\ell}) \cong Q(V_{K,B,\ell}, \widetilde{\pi}_{B,\ell})$ as quandles for all $\ell \in \Sigma,$ then $A$ and $B$ are isogenous over $\F_q.$ 
\end{corollary}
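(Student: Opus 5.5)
The plan is to reduce Corollary \ref{main cor K} to Theorem \ref{main thm G}. For each $\ell \in \Sigma$ I will show that the quandle isomorphism $Q(V_{K,A,\ell},\widetilde{\pi}_{A,\ell}) \cong Q(V_{K,B,\ell},\widetilde{\pi}_{B,\ell})$ forces $Q(A[\ell](\overline{\F}_q),\pi_{A,\ell}) \cong Q(B[\ell](\overline{\F}_q),\pi_{B,\ell})$. Theorem \ref{main thm G} then applies directly, since $\prod_{\ell\in\Sigma}\ell > 2\binom{2g}{g}q^{g/2}$ is assumed.

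Fix $\ell \in \Sigma$. Since $\ell \ne p$ and $\ell \nmid R_{A,K}$, with $R_{A,K}\neq 0$, Theorem \ref{main thm A}(ii) gives $\ell \notin \Sigma_A(K)$, that is, $\mathcal{C}_{A,\ell}(K)=0$. The same argument applied to $B$ gives $\mathcal{C}_{B,\ell}(K)=0$.

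Next I use the $R_\ell$-module decomposition established before Lemma \ref{reduction rel prime lem0}, namely $V_{K,A,\ell}\cong A[\ell](\overline{\F}_q)\oplus \mathcal{C}_{A,\ell}(K)$. Here $t$ acts on $V_{K,A,\ell}$ as $\widetilde{\pi}_{A,\ell}$, the isomorphism $\psi_a$ is $R_\ell$-linear, and $\mathcal{C}_{A,\ell}(K)=0$. Hence $(V_{K,A,\ell},\widetilde{\pi}_{A,\ell})\cong (A[\ell](\overline{\F}_q),\pi_{A,\ell})$ as $R_\ell$-modules, and likewise for $B$. By (ii)$\Rightarrow$(i) of Theorem \ref{main thm F}, these become quandle isomorphisms. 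Composing with the hypothesis, I get
$$Q(A[\ell](\overline{\F}_q),\pi_{A,\ell})\cong Q(V_{K,A,\ell},\widetilde{\pi}_{A,\ell})\cong Q(V_{K,B,\ell},\widetilde{\pi}_{B,\ell})\cong Q(B[\ell](\overline{\F}_q),\pi_{B,\ell})$$
for every $\ell\in\Sigma$, and Theorem \ref{main thm G} concludes.

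A more direct route would apply Theorem \ref{main thm F} to $V_{K,A,\ell}$ and $V_{K,B,\ell}$ themselves. This is valid because both are finite-dimensional $\F_\ell$-spaces and $\widetilde{\pi}_{A,\ell},\widetilde{\pi}_{B,\ell}$ are invertible. It would make the two $R_\ell$-modules isomorphic, and one would then cancel. Either route works, and the first needs no cancellation.

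There is no real obstacle. The only point needing care is confirming that the nonvanishing of $\mathcal{C}_{A,\ell}(K)$ is exactly what $\Sigma_A(K)$ records, which holds by the definition preceding Theorem \ref{main thm A}. One should also check that the decomposition respects the $t$-action, and this holds because $\psi_a$ is $R_\ell$-linear.
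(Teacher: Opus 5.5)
Your proposal is correct and follows essentially the same route as the paper. You use Theorem~\ref{main thm A} to get $\mathcal{C}_{A,\ell}(K)=\mathcal{C}_{B,\ell}(K)=0$ for $\ell\in\Sigma$, deduce $V_{K,A,\ell}\cong A[\ell](\overline{\F}_q)$ and $V_{K,B,\ell}\cong B[\ell](\overline{\F}_q)$ as $R_\ell$-modules and hence as quandles (your explicit citation of Theorem~\ref{main thm F}, (ii)$\Rightarrow$(i), makes precise a step the paper leaves implicit), chain these with the hypothesis, and conclude with Theorem~\ref{main thm G}.
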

\begin{proof}
    By Theorem \ref{main thm A}, we know that $\Sigma_A(K)=\{\ell \in \mathcal{P} : \ell \ne p~\textrm{and}~\ell \mid R_{A,K} \}$ and $\Sigma_B(K)=\{\ell \in \mathcal{P} : \ell \ne p~\textrm{and}~\ell \mid R_{B,K} \}$. Then $\Sigma \cap (\Sigma_A(K) \cup \Sigma_B(K))$ is empty, and hence, for every $\ell \in \Sigma,$ we have $\mathcal{C}_{A,\ell}(K)=\mathcal{C}_{B,\ell}(K)=0.$ Hence $V_{K,A,\ell} \cong A[\ell](\overline{\F}_q)$ and $V_{K,B,\ell} \cong B[\ell](\overline{\F}_q)$ as $R_{\ell}$-modules. In particular, there exist quandle isomorphisms $Q(V_{K,A,\ell}, \widetilde{\pi}_{A,\ell}) \cong Q(A[\ell](\overline{F}_q), \pi_{A,\ell})$ and $Q(V_{K,B,\ell}, \widetilde{\pi}_{B,\ell}) \cong Q(B[\ell](\overline{F}_q), \pi_{B,\ell}).$ Now, by assumption, it follows that $Q(A[\ell](\overline{\F}_q), \pi_{A,\ell}) \cong Q(B[\ell](\overline{\F}_q), \pi_{B,\ell})$ as quandles for all $\ell \in \Sigma$, and hence, $A$ and $B$ are isogenous over $\F_q$ by Theorem \ref{main thm G}.
    \vskip 0.1in 
    This completes the proof.
\end{proof}

The next example shows that the largeness of a prime $\ell \ne p$ is essential in Corollary \ref{main cor I}.
\begin{example}
    Let $A : y^2 = x^3 + 3x+2$ and $B : y^2 = x^3 +4x$ be elliptic curves over $\F_5.$ Then by a direct computation, we can see that $\sharp A(\F_5)=5$ and $\sharp B(\F_5)=8$. Take $\ell =3 \ne 5.$ Now, the characteristic polynomials of Frobenius maps are computed as $P_A (t)=t^2 -t+5 \in \Z[t]$ and $P_B(t)=t^2 +2t+5 \in \Z[t].$ Hence $A$ and $B$ are not isogenous over $\F_5$ by \cite[Theorem 1]{Tate66}. On the other hand, modulo $\ell = 3,$ we have $\overline{P}_A(t)=\overline{P}_B(t) = t^2 + 2t +2 \in \F_{3}[t]$, which is irreducible over $\F_3,$ and hence, $A[3](\overline{\F}_5) \cong B[3](\overline{\F}_5)$ as $\F_{3}[t,t^{-1}]$-modules where $t$ acts as respective Frobenius maps. It follows that $Q(A[3](\overline{\F}_5), \pi_{A,3}) \cong  Q(B[3](\overline{\F}_5), \pi_{B,3})$ as quandles.  
\end{example}
The following example illustrates an application of Corollary \ref{main cor I}.
\begin{example}
    According to \cite{Fis21}, John Cremona found a pair of anti-symplectically $17$-congruent elliptic curves, which are given by the equations $A : y^2 + xy = x^3 - 8x + 27$ and $B
:y^2 + xy = x^3 + 8124402x - 11887136703,$ and with conductors $N(A)=3 \cdot 5^2 \cdot 7^2$ and $N(B)=3 \cdot 5^2 \cdot 7^2 \cdot 13.$ In particular, $11$ does not divide either conductor, and hence, $A$ and $B$ have good reduction at $11.$ The reductions of $A$ and $B$ modulo $11$ are given as $\overline{A} : y^2 + xy =x^3 +3x+5$ and $\overline{B} : y^2 +xy =x^3 +9,$ both of which are elliptic curves over $\F_{11}$. Take $\ell =17 \ne 11.$ Since $A$ and $B$ are $17$-congruent elliptic curves, there exists a $G_{\Q}$-equivariant isomorphism $\psi \colon A[17](\overline{\Q}) \rightarrow B[17](\overline{\Q})$. By restricting $\psi$ to a decomposition group at $11,$ and using the compatibility of prime-to-$11$ torsion with good reduction \cite[Propositions VII.3.1 and VII.4.1]{Sil08}, we obtain a $\mathrm{Gal}(\overline{\F}_{11}/\F_{11})$-equivariant isomorphism $\overline{\psi} \colon \overline{A}[17](\overline{\F}_{11}) \rightarrow \overline{B}[17](\overline{\F}_{11})$, which induces a quandle isomorphism $Q(\overline{A}[17](\overline{\F}_{11}), \pi_{A,17}) \cong Q(\overline{B}[17](\overline{\F}_{11}), \pi_{B,17})$. Since $\ell=17>4 \sqrt{11},$ it follows from Corollary \ref{main cor I} that $\overline{A}$ and $\overline{B}$ are isogenous over $\F_{11}.$
\end{example}

\bigskip

\noindent
\textsc{Department of Mathematics, and Institute of Pure and Applied Mathematics, Jeonbuk National University, Baekje-daero, Deokjin-gu, Jeonju-si, Jeollabuk-do, 54896, Republic of Korea}
\vskip 0.1in
\noindent
\textit{Email address:} hwangwon@jbnu.ac.kr

\end{document}